\documentclass[12pt]{article}
 \usepackage{tikz}
 \usepackage{subcaption} 
 \usepackage{amsmath}
     \usepackage{bm}
     \allowdisplaybreaks[4]
     \usepackage[all]{xy}
     \usepackage{amssymb}
    \usepackage{amsthm}
    \usepackage{hyperref}
    \hypersetup{colorlinks=true,linkcolor=blue,citecolor=red}
    \usepackage{amscd}
    \usepackage{verbatim}
    \usepackage{eurosym}
    \usepackage{float}
    \usepackage{xcolor}
    \usepackage{dcolumn}
    \usepackage[mathscr]{eucal}
    \usepackage{pgfplots}
    \usetikzlibrary{calc}
    \usepackage{yhmath}
    \usepackage[T1]{fontenc}
    \usepackage[english]{babel}
    \makeatletter
    \DeclareTextSymbol{\cprime}{T1}{39} 
    \makeatother
    \usepackage{mathrsfs}
    \AtBeginDocument{%
      \providecommand{\bysame}{%
        \leavevmode\hbox to3em{\hrulefill}\thinspace
      }%
    }
    \usepackage{amsfonts,ifpdf}
    \usepackage{graphicx}
    \usepackage{times}
    \usepackage{epstopdf}
    \usepackage{cite}
    \usepackage{youngtab}
    \usepackage{ytableau}
    \usepackage{indentfirst}
    \usepackage{enumitem}
    \usepackage{caption}
    \usetikzlibrary{cd}
    \usepackage{soul}
    
    \ytableausetup
    {mathmode, boxsize=0.9em}
    
    \usepackage{listings}

\def\int{\mbox{\rm int}}

    \definecolor{codegray}{gray}{0.5}
    \definecolor{codegreen}{rgb}{0,0.6,0}
    \definecolor{codeblue}{rgb}{0.2,0.2,0.6}
    \definecolor{backcolour}{rgb}{0.95,0.95,0.95}
    
    \lstdefinestyle{mystyle}{
    	backgroundcolor=\color{backcolour},
    	commentstyle=\color{codegreen},
    	keywordstyle=\color{codeblue},
    	numberstyle=\tiny\color{codegray},
    	stringstyle=\color{codeblue},
    	basicstyle=\small\ttfamily,
    	breakatwhitespace=false,
    	breaklines=true,
    	captionpos=b,
    	keepspaces=true,
    	numbers=left,
    	numbersep=5pt,
    	showspaces=false,
    	showstringspaces=false,
    	showtabs=false,
    	tabsize=2
    }
    
    \newtheorem{theorem}{Theorem}[section]
    
    \newtheorem{lemma}[theorem]{Lemma}
    \newtheorem{prop}[theorem]{Proposition}
    \newtheorem{coro}[theorem]{Corollary}

   \theoremstyle{definition}
   \newtheorem{defi}[theorem]{Definition}

   \newtheorem{pf}{Proof}[section]
    \theoremstyle{remark}
   \newtheorem*{remark*}{Remark}

   \makeatletter \@addtoreset{equation}{section} \makeatother
   \makeindex 

\begin{document}
\begin{center}
{\Large\bf
 Convolution-type Identity for Characteristic Polynomials of  Geometric Semilattices}\\[7pt]
\end{center}
\vskip3mm

\begin{center}
Yanru Chen$^{1}$\quad Houshan Fu$^{2}$\quad Ang Li$^{3,*}$\quad Suijie Wang$^{4}$\\[8pt]
 
$^{1,3,4}$School of Mathematics, Hunan University\\
 Changsha 410082, Hunan, P. R. China\\[8pt]
 
$^{2}$School of Mathematics and Information Science, Guangzhou University\\
Guangzhou 510006, Guangdong, P. R. China\\[8pt]

$^4$Greater Bay Area Institute for Innovation, Hunan University\\
 Changsha 410082, Hunan, P. R. China\\[10pt]

$^*$Correspondence to be sent to: leonli@hnu.edu.cn  \\
$^{1}$yanruchen@hnu.edu.cn, $^{2}$fuhoushan@gzhu.edu.cn, $^{4}$wangsuijie@hnu.edu.cn\\[12pt]

\end{center}
\vskip 3mm 	
\begin{abstract}
We establish a convolution formula for the characteristic polynomial of a finite geometric semilattice $M$:
\[
\chi(M,st)=\sum_{X\in \underline{M}} s^{r-{\rm rk}_{\underline{M}}(X)}\chi(\underline{M}^X,t)\,\chi(M_{(X)},s),
\]
where $\underline{M}$ denotes the centralization of $M$, and $M_{(X)}$ denotes the localization at $X$. This generalizes a nice formula of Southerland, Southern, and Zhou, which is recovered at $s=1$. When specialized to hyperplane arrangements, the identity yields a new expansion closely related to Wang's convolution formula. We further provide a combinatorial interpretation of the convolution formula using the finite field method over $\mathbb{F}_{p^2}$ and $\mathbb{F}_p$.
\end{abstract}

\section{Introduction}
The characteristic polynomial $\chi(M,t)$ of a geometric semilattice $M$ was first defined in terms of its M\"{o}bius function by Rota \cite{rota1964}, and it encodes a wealth of fundamental combinatorial and geometric information about $M$.  Specifically, the {\em characteristic polynomial} $\chi(M,t)$ of a finite geometric semilattice $M$ of rank $r$ is defined as
\[
\chi(M,t):=\sum_{X\in M}\mu(\hat{0},X)t^{r-{\rm rk}_M(X)},
\]
where $\mu$ is the M\"{o}bius function of $M$, defined recursively by $\mu(X,X)=1$ and $\mu(X,Y)=\sum_{X\le Z<Y}-\mu(X,Z)$ for $X< Y$ in $M$, and  ${\rm rk}_M$ is the {\em rank function} of $M$. It is one of the most important and extensively studied polynomial invariants in graphs, geometric lattices (matroids), hyperplane arrangements, and geometric semilattices (semimatroids). In the setting of hyperplane arrangements, Zaslavsky's celebrated region-counting theorem in \cite{Zaslavsky1975Facing} expresses the number of all regions and the number of relatively bounded regions of a real hyperplane arrangement $\mathcal{A}$ as the evaluations $|\chi(\mathcal A,-1)|$ and $|\chi(\mathcal A,1)|$, respectively. Over finite fields, the characteristic polynomial also admits an explicit combinatorial interpretation via the finite field method independently developed by Athanasiadis \cite{athanasiadis1996}, and by Bj\"{o}rner and Ekedahl \cite{Bjorner-Ekedahl1997}. 

One central theme in this area is the study of convolution formulae for characteristic polynomials. In 1999,  Kook, Reiner and Stanton \cite{kook1999} established a convolution formula for the Tutte polynomial of matroids using incidence algebra methods, which was also independently discovered by Etienne and Las Vergnas \cite{etienne1998}. Subsequently, Kung presented a number of convolution-multiplication identities for multiplicative characteristic polynomials and Tutte polynomials of  graphs and matroids in  \cite{kung2004,kung2010}. More precisely, the multiplicative characteristic polynomial $\chi(M,st)$ of a matroid $M$ can be expressed as the sum, over all flats $X\in L(M)$, of the product of characteristic polynomials of its contraction $M^X$ and the submatroid  $M_X$ associated to $X$, i.e.,
\begin{equation}\label{GL-Convolution}
\chi(M,st)=\sum_{X\in L(M)} s^{r-{\rm rk}_M(X)}\chi(M^X,t)\chi(M_X,s),
\end{equation}
where $L(M)$ is the corresponding geometric lattice of $M$. In 2015, Wang \cite{wang2015} introduced the concept of M\"obius conjugation of posets as a unified method to reprove previous convolution formulae, and first provided a convolution formula for the multiplicative characteristic polynomial $\chi(\mathcal{A},st)$ of a hyperplane arrangement $\mathcal{A}$ in \cite[Theorem 2.1]{wang2015}:
\begin{equation}\label{HA-Convolution}
\chi(\mathcal{A},st) = \sum_{V\in L(\mathcal{A})}\chi(\mathcal{A}^V,t)\,\chi(\mathcal{A}_V,s),
\end{equation}
where $L(\mathcal{A})$ is the corresponding geometric semilattice (intersection poset) of $\mathcal{A}$, and $\mathcal{A}^V$ and $\mathcal{A}_V$ are its restriction and subarrangement associated with $V$ respectively, as presented in Definition \ref{HA-Cone}. Most recently, Fu extended this program to semimatroids in \cite{fu2025semimatroids}, and then placed it within the broader framework of multivariate Tutte polynomials in \cite{fu2025multivariate}.

In \cite{ehrenborg2019counting}, Ehrenborg first introduced the concept of level for faces of real hyperplane arrangements $\mathcal{A}$, and then enumerated the faces of extended Shi arrangements by both dimension and level. Subsequently, Zaslavsky \cite{zaslavsky2003ideal} gave a counting formula for the number of faces of dimension $n$ and level $l$ in terms of the characteristic and Whitney polynomials of some related arrangements. This result refines his classical region-counting formula, and is now commonly referred to as the Zaslavsky level-counting formula. Most recently, Southerland, Southern, and Zhou \cite{southerland2025region} restated the Zaslavsky level-counting formula using a construction called the centralization $\underline{\mathcal{A}}$ of $\mathcal{A}$, provided a bijective proof, and further applied it to give a nice expression for the characteristic polynomial $\chi(\mathcal{A},t)$ in \cite[Theorem 6.4]{southerland2025region}:
\begin{equation}\label{CP-Convolution}
\chi(\mathcal{A},t) = \sum_{V \in L(\underline{\mathcal{A}})}\chi(\underline{\mathcal{A}}^V,t)\chi(\mathcal{A}_{(V)},1),
\end{equation}
where $\mathcal{A}_{(V)}$ is the localization of $\mathcal{A}$ to $V$, as given in Definition \ref{HA-Cone}. Additionally, they extended the two constructions of centralization and cone of a hyperplane arrangement to the broader framework of geometric semilattices $M$, and obtained an analogous formula for the characteristic polynomial $\chi(M,t)$ in \cite[Theorem 1.4]{southerland2025region}:
\begin{equation}\label{GSL-Convolution}
\chi(M,t)=\sum_{X\in\underline{M}}\chi({\underline{M}^X},t)\chi(M_{(X)},1),
\end{equation}
where $\underline M$  and $M_{(X)}$ are the centralization and  the localization of $M$ at $X$, respectively, as presented in Definition \ref{Def-Cone}.

Motivated by Southerland, Southern, and Zhou's work, our main result is a convolution formula for the multiplicative characteristic polynomial $\chi(M,st)$ of a finite geometric semilattice $M$, based on the perspectives of centralization and localization of geometric semilattices. Comparing this with the standard convolution formula \eqref{GL-Convolution}, our formula ranges over its centralization  
$\underline M$, rather than over $M$ itself, and also agrees with  \eqref{GL-Convolution} when $M$ is a geometric lattice (matroid).
\begin{theorem}\label{main thm}
Let $M$ be a finite geometric semilattice of rank $r$. Then
\[
\chi(M,st)=\sum_{X\in \underline{M}} s^{\,r-{\rm rk}_{\underline{M}}(X)}\chi(\underline{M}^X,t)\,\chi(M_{(X)},s).
\]
\end{theorem}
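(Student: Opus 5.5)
The plan is to derive the identity formally from two results already available: the Southerland–Southern–Zhou formula \eqref{GSL-Convolution} for geometric semilattices, and Kung's convolution formula \eqref{GL-Convolution} for geometric lattices. Two structural facts about centralization and localization are needed along the way; I would verify these first from Definition~\ref{Def-Cone}.
\begin{enumerate}
\item[(a)] $\underline{M}$ is a geometric lattice of rank $r$. Each contraction $\underline{M}^X$ is the upper interval $[X,\hat 1]$ of $\underline{M}$.
\item[(b)] For $Y\in\underline{M}$, the localization $M_{(Y)}$ is a geometric semilattice of rank ${\rm rk}_{\underline{M}}(Y)$. Its centralization is the lower interval $[\hat0,Y]$ of $\underline{M}$, i.e.\ $\underline{M}_Y$. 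Moreover, localization is transitive: for $X\le Y$ in $\underline{M}$ we have $(M_{(Y)})_{(X)}=M_{(X)}$.
\end{enumerate}

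\textbf{Step 1.} Substitute $st$ for $t$ in \eqref{GSL-Convolution}:
\[
\chi(M,st)=\sum_{X\in\underline{M}}\chi(\underline{M}^X,st)\,\chi(M_{(X)},1).
\]

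\textbf{Step 2.} By (a), $\underline{M}^X$ is a geometric lattice of rank $r-{\rm rk}_{\underline{M}}(X)$, so \eqref{GL-Convolution} applies to it. Its flats are the $Y\ge X$. The contraction at $Y$ is $\underline{M}^Y$, the restriction is the interval $[X,Y]$, and the corank of $Y$ in $\underline{M}^X$ is $r-{\rm rk}_{\underline{M}}(Y)$. Hence
\[
\chi(\underline{M}^X,st)=\sum_{Y\ge X} s^{\,r-{\rm rk}_{\underline{M}}(Y)}\chi(\underline{M}^Y,t)\,\chi([X,Y],s).
\]

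\textbf{Step 3.} Substitute this into Step 1 and interchange the two sums. This gives
\[
\chi(M,st)=\sum_{Y\in\underline{M}} s^{\,r-{\rm rk}_{\underline{M}}(Y)}\chi(\underline{M}^Y,t)\sum_{X\le Y}\chi([X,Y],s)\,\chi(M_{(X)},1).
\]

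\textbf{Step 4.} It remains to show that the inner sum equals $\chi(M_{(Y)},s)$. Apply \eqref{GSL-Convolution} to the geometric semilattice $M_{(Y)}$ with variable $s$. By (b), its centralization is $[\hat0,Y]$, whose contraction at $X$ is $[X,Y]$. Its localization at $X$ is $(M_{(Y)})_{(X)}=M_{(X)}$. So \eqref{GSL-Convolution} reads exactly
\[
\chi(M_{(Y)},s)=\sum_{X\le Y}\chi([X,Y],s)\,\chi(M_{(X)},1),
\]
which completes the proof.

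The algebra in Steps 1–4 is routine. The main obstacle is establishing (b) carefully from the definitions, in particular that the centralization of $M_{(Y)}$ is precisely the interval $\underline{M}_Y$ with matching rank function, and that localization is transitive. My first attempt would use the cone-lattice description of $M$, following Wachs–Walker: $M$ is the complement of an upper interval in a geometric lattice. In that model both centralization and localization should become explicit operations on intervals, so the compatibilities can be checked directly. As a consistency check, the case $s=1$ should reduce to \eqref{GSL-Convolution}.
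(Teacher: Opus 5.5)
Your proof is correct, but it follows a genuinely different route from the paper's. You bootstrap from the $s=1$ case: substitute $st$ into \eqref{GSL-Convolution}, expand each $\chi(\underline{M}^X,st)$ by the lattice formula \eqref{GL-Convolution} on $\underline{M}^X$, interchange the sums, and recognize the inner sum as \eqref{GSL-Convolution} for the localization $M_{(Y)}$. The bookkeeping of ranks and $s$-exponents in Steps 2--4 checks out.

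The facts in (b), which you single out as the main obstacle, are exactly what the paper's Theorem~\ref{isomo_1} supplies. From $(cM)_Y=c(M_{(Y)})$ you can read off that the centralization of $M_{(Y)}$ is $\{Z\in\underline{M}: Z\le Y\}$, with the rank function of $\underline{M}$. You can also read off that $(M_{(Y)})_{(X)}=(cM)_X-\underline{M}=M_{(X)}$ for $X\le Y$. Moreover, $M_{(Y)}=[\hat 0,Y]-[a_0,Y]$ inside the geometric lattice $cM$, so Wachs--Walker shows it is a geometric semilattice of rank ${\rm rk}_{\underline{M}}(Y)$.

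The paper, by contrast, never invokes \eqref{GSL-Convolution}. It applies Proposition~\ref{convolution over semimatroid} to the geometric lattice $cM$ and splits the sum over $cM=M\sqcup\underline{M}$. It then identifies the two pieces using Theorems~\ref{isomo_1} and \ref{isomo_2} together with $\chi(cN,t)=(t-1)\chi(N,t)$ (Proposition~\ref{cM-M}), and finally cancels a factor $s-1$.

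Your argument is shorter and needs only the lower-interval half of the structure theory: no Theorem~\ref{isomo_2} and no Proposition~\ref{cM-M}. It also makes clear that the multiplicative identity is a formal consequence of its $s=1$ specialization plus the lattice case. What it gives up is independence. It takes the Southerland--Southern--Zhou theorem as input, so ``recovering \eqref{GSL-Convolution} at $s=1$'' becomes only a consistency check. The paper's cone argument is self-contained in this respect and yields a new proof of \eqref{GSL-Convolution}.
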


It is worth noting that by taking $s=1$ in Theorem \ref{main thm}, we can recover Southerland, Southern, and Zhou's formula \eqref{GSL-Convolution}. Specialized to hyperplane arrangements, Theorem \ref{main thm} directly yields a new convolution formula for the multiplicative characteristic polynomial $\chi(\mathcal{A},st)$, presented in Corollary \ref{cor:arrangement}, which closely resembles Wang's original formula \eqref{HA-Convolution}. Further setting $s=1$ in Corollary \ref{cor:arrangement} recovers Southerland, Southern, and Zhou's convolution formula for the characteristic polynomial $\chi(\mathcal{A},t)$ in \eqref{CP-Convolution}, and also shows that \eqref{CP-Convolution} holds over any field $\mathbb{F}$, not just $\mathbb{R}$.
\begin{coro}\label{cor:arrangement}
Let $\mathcal{A}$ be a hyperplane arrangement in $\mathbb{F}^n$.  Then
\[
\chi(\mathcal{A},st) = \sum_{V \in L(\underline{\mathcal{A}})}\chi(\underline{\mathcal{A}}^V,t)\,\chi(\mathcal{A}_{(V)},s).
\]
\end{coro}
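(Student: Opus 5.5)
The corollary should follow by specializing Theorem \ref{main thm} to the geometric semilattice $M=L(\mathcal{A})$ of a (possibly non-central) arrangement $\mathcal{A}$ in $\mathbb{F}^n$. The work is to translate the semilattice-level objects $\underline{M}$, $\underline{M}^X$, $M_{(X)}$ and the rank exponents into the arrangement-level objects $L(\underline{\mathcal{A}})$, $\underline{\mathcal{A}}^V$, $\mathcal{A}_{(V)}$ of Definition \ref{HA-Cone}, and then to account for the differing normalizations of characteristic polynomials.

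First I would identify the centralization. For $M=L(\mathcal{A})$, the semilattice $\underline{M}$ of Definition \ref{Def-Cone} should be isomorphic to $L(\underline{\mathcal{A}})$ as a ranked poset, with $X\leftrightarrow V$. Concretely, $\underline{\mathcal{A}}$ consists of the linear hyperplanes $H_0$ parallel to the $H\in\mathcal{A}$, and its flats are the linear parts of flats of $\mathcal{A}$ together with intersections of such; this should match the centralization construction used by Southerland, Southern and Zhou. Under this identification I would check two further correspondences. The upper interval $\underline{M}^X$ should be $L(\underline{\mathcal{A}}^V)$. The localization $M_{(X)}$ should be $L(\mathcal{A}_{(V)})$, where $\mathcal{A}_{(V)}$ is the set of hyperplanes of $\mathcal{A}$ whose linear part contains $V$. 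Each of these is an order-preserving bijection of flats, which I would verify directly from the definitions.

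Next I would reconcile the polynomials. For arrangements, $\chi(\mathcal{A},t)=\sum_{Y}\mu(\hat0,Y)t^{\dim Y}=t^{\,n-r}\chi(L(\mathcal{A}),t)$ with $r={\rm rk}\,\mathcal{A}$, and the same relation holds for restrictions and localizations relative to their ambient spaces. Writing $r_V={\rm rk}_{\underline{M}}(X)=n-\dim V$, I would use:
\begin{itemize}
\item $\chi(\underline{\mathcal{A}}^V,t)=t^{\dim V-(r-r_V)}\chi(\underline{M}^X,t)$, since $\underline{\mathcal{A}}^V$ lives in $V$ and has rank $r-r_V$;
\item $\chi(\mathcal{A}_{(V)},s)=s^{\,n-r_V}\chi(M_{(X)},s)$, since $\mathcal{A}_{(V)}$ lives in $\mathbb{F}^n$ and has rank $r_V$.
\end{itemize}
Multiplying the main theorem by $(st)^{n-r}$ then turns each summand $s^{r-r_V}\chi(\underline{M}^X,t)\chi(M_{(X)},s)$ into $\chi(\underline{\mathcal{A}}^V,t)\chi(\mathcal{A}_{(V)},s)$. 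This is an exponent check: the power of $t$ is $n-r=\dim V-(r-r_V)$ because $\dim V=n-r_V$, and the power of $s$ is $(n-r)+(r-r_V)=n-r_V$. Both match.

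The main obstacle is the rank bookkeeping. The rank of the localization and of the restricted centralization must be exactly $r_V$ and $r-r_V$, which uses that $\underline{M}$ has the same rank $r$ as $M$. I would also confirm that $\mathcal{A}_{(V)}$ may be non-central, so it is not a subarrangement of a central type, and that its semilattice still has rank $r_V$. This holds because its normals span $V^{\perp}$, which has dimension $r_V$. Setting $s=1$ then gives \eqref{CP-Convolution} over any field $\mathbb{F}$.
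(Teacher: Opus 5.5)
Your proposal is correct and takes the same route as the paper, which obtains the corollary by specializing Theorem \ref{main thm} to $M=L(\mathcal{A})$ and gives no further argument. Your dictionary $\underline{M}\cong L(\underline{\mathcal{A}})$, $\underline{M}^X\cong L(\underline{\mathcal{A}}^V)$, $M_{(X)}\cong L(\mathcal{A}_{(V)})$ makes explicit what the paper leaves implicit, and your exponent check for the rescaling by $(st)^{n-r}$ between the two normalizations of $\chi$ is correct.
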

A natural question is to seek combinatorial, geometric, or algebraic interpretations of these convolution formulae. Such interpretations appear extremely challenging for abstract geometric semilattices. When restricting our attention to hyperplane arrangements,  Section \ref{Sec4} gives the combinatorial interpretations for the convolution formulae of multiplicative characteristic polynomials presented in \eqref{HA-Convolution} and Corollary \ref{cor:arrangement} via the finite field method \cite{athanasiadis1996,Bjorner-Ekedahl1997}. 
   
The paper is organized as follows. Section \ref{Sec2} collects the necessary definitions on geometric semilattices and hyperplane arrangements, and then establishes two structure theorems concerning an arbitrary geometric semilattice and its cone. Section \ref{Sec3} is devoted to proving Theorem \ref{main thm}. Section \ref{Sec4} focuses on presenting the combinatorial interpretations of these convolution formulae.
\section{Structure theorems}\label{Sec2}
In this section, we focus on the necessary definitions of geometric semilattices and hyperplane arrangements, and then establish two structure theorems for any geometric semilattice and its associated cone.
\subsection{Preliminaries}\label{Sec2-1}
We briefly recall some necessary definitions regarding geometric semilattices and hyperplane arrangements. For further details, we refer the reader to \cite{stanley2007hyperplane,wachs1986}. 
\begin{defi}
A ranked lattice $L$ is called \emph{semimodular} if for all $s,t\in L$,
\[
\mathrm{rk}_L(s\wedge t)+\mathrm{rk}_L(s\vee t)\le \mathrm{rk}_L(s)+\mathrm{rk}_L(t).
\]
If every element $s\in L$ can be written as a join of some atoms, then $L$ is called \emph{atomistic}. $L$ is called a \emph{geometric lattice}
if it is both semimodular and atomistic.
\end{defi}

\begin{defi}
Let $M$ be a ranked meet-semilattice. A set $S$ of atoms of $M$ is \emph{independent} if they have an upper bound $\bigvee S$ in $M$ and ${\rm rk}_M\big(\bigvee S\big)=|S|$. $M$ is called a \emph{geometric semilattice} if it satisfies
\begin{itemize}
\item every principal order ideal is a geometric lattice;
\item whenever $S$ is an independent set of atoms of $M$ and $t\in M$ satisfies ${\rm rk}_M(t)<{\rm rk}_M\big(\bigvee S\big)$, there exists some $a\in S$ such that $a\nleq t$ and $t\vee a$ exists in $M$.
\end{itemize}
\end{defi}
   
\begin{defi}
Let $M$ be a finite geometric semilattice. For each $X\in M$, the {\em principal order ideal } $M_X$ and the {\em principal dual order ideal } $M^X$ are respectively
\[
M_X:=\{Y\in M: Y\le X\}\quad{\rm and}\quad M^X:=\{Y\in M:Y\ge X\}.
\]
\end{defi}
A {\em hyperplane arrangement} $\mathcal{A}$ is a finite collection of affine hyperplanes in an $n$-dimensional vector space $\mathbb{F}^n$. The {\em intersection poset} $L(\mathcal{A})$ of $\mathcal{A}$ is a poset consisting of all nonempty intersections of  some hyperplanes from $\mathcal{A}$, ordered by reverse inclusion. In fact, the intersection posets of hyperplane arrangements are examples of geometric semilattices, and intersection posets of central arrangements are geometric lattices. Note that geometric lattices and geometric semilattices correspond to simple matroids and semimatroids in \cite{ardila2007} respectively, though we do not use that language in this paper. 

Most recently, using the centralization of hyperplane arrangements, Southerland, Southern and Zhou \cite{southerland2025region} revisited Zaslavsky's level-counting formula.
\begin{defi}\label{HA-Cone}
For a hyperplane \(H = \{\bm x \in \mathbb{F}^n : \bm a\cdot \bm x = b \}\) in $\mathbb{F}^n$, the \emph{centralization} of \(H\) is 
\[
\underline{H} := \{ \bm x \in \mathbb{F}^n : \bm a \cdot \bm x = 0\},
\]
which is the unique parallel hyperplane passing through the origin. Let $\mathcal{A}= \{H_1, \dots, H_m\}$ be a hyperplane arrangement in $\mathbb{F}^n$. The \emph{centralization} of $\mathcal{A}$ is given by 
\[
\underline{\mathcal{A}} := \{\underline{H_1}, \dots, \underline{H_m}\}.
\]
For a linear subspace $V \subseteq \mathbb{F}^n$, the \emph{localization} and \emph{restriction} of $\mathcal{A}$ at $V$ are respectively
\[
\mathcal{A}_{(V)} := \{H \in \mathcal{A} : V \subseteq H \text{ or } V \cap H = \emptyset\}\quad{\rm and}\quad 
\mathcal{A}^V := \{H \cap V : H \in \mathcal{A}\} - \{\emptyset, V\}.
\]
Naturally, each flat $V\in L(\mathcal{A})$ induces a subarrangement $\mathcal{A}_V$ of  $\mathcal{A}$ defined by
\[
\mathcal{A}_V:= \{H \in \mathcal{A} : V \subseteq H\}.
\]
It is obvious that $\mathcal{A}_V$ is also a subarrangement of the localization $\mathcal{A}_{(V)}$.
\end{defi}
Furthermore, Southerland, Southern, and Zhou \cite{southerland2025region} introduced the cone $cM$ and the centralization $\underline{M}$ of an arbitrary geometric semilattice $M$, as the abstract structures corresponding to the cone and centralization of an affine arrangement, respectively. They also showed that the cone $cM$ and the centralization $\underline{M}$ are both geometric lattices.
\begin{defi}\label{Def-Cone}
Let $M$ be a geometric semilattice with atoms $A=\{a_1,\dots,a_k\}$. Associated with a distinguished additional `atom' $a_0$, for each \(X\in M\), define $X^*:=\{a\in A: a \text{ and } X \text{ have no common upper bound}\}$, and $\underline{X}:=X\sqcup X^*\sqcup\{a_0\}$. The \emph{cone} $cM$ of \(M\) is the poset
   		\[
   		cM:=\{X:\ X\in M\}\sqcup\{\underline{X}:\ X\in M\},
   		\]
   		ordered by inclusion. The \emph{centralization} $\underline{M}$ of \(M\) is the subposet of $cM$ defined as
   		\[
   		\underline{M}:=\{\underline{X}: X\in M\}.
   		\]
In addition, for any $X\in \underline{M}$, the {\em localization} $M_{(X)}$ of $M$ at $X$ is given by
\[
M_{(X)}:=(cM)_X-\underline{M}.
\]
\end{defi}	
 \subsection{Structure Theorems}\label{Sec2-2}
Let us start with a basic exchange property concerning atoms of a geometric lattice, which will be needed later.
\begin{lemma}\label{lem:atomic-exchange}
For a geometric semilattice $M$, let \(cM\) be as in Definition \ref{Def-Cone}.  For any $X\in cM$, and atoms $a,b$ of $cM$, if $a\nleq X$ and $b\nleq X$, then
\[
a\le X\vee_{cM} b \quad\Longleftrightarrow\quad b\le X\vee_{cM} a.
\]
\end{lemma}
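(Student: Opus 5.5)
The plan is to reduce the statement to the standard exchange property in a geometric lattice, using the fact recalled above (from \cite{southerland2025region}) that the cone $cM$ is a geometric lattice. In a semimodular ranked lattice, joining an atom raises rank by at most one. We also have the covering form of semimodularity: if $a\nleq X$ for an atom $a$, then $X\vee a$ covers $X$. To see this, apply the semimodular inequality to $s=X$, $t=a$. Since $X\wedge a=\hat 0$, we get ${\rm rk}(X\vee a)\le {\rm rk}(X)+1$, and $X<X\vee a$ strictly, so ${\rm rk}(X\vee a)={\rm rk}(X)+1$. All joins here are taken in $cM$, which exists because $cM$ is a lattice.

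The key steps, in order, are as follows.

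\textbf{Step 1.} Invoke that $cM$ is a geometric lattice. Since $a,b$ are atoms of $cM$ (including possibly $a_0$), both are rank-one elements.

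\textbf{Step 2.} By symmetry it suffices to prove one direction. Assume $a\le X\vee b$, and note that $a\le X\vee a$ trivially. Both $X\vee a$ and $X\vee b$ have rank ${\rm rk}(X)+1$ by the covering argument above.

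\textbf{Step 3.} From $a\le X\vee b$ and $X\le X\vee b$ we get $X\vee a\le X\vee b$. These two elements have equal rank, and in a ranked poset comparable elements of equal rank coincide. Hence $X\vee a=X\vee b$, and therefore $b\le X\vee b=X\vee a$.

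The only genuine input is that $cM$ is a geometric (hence semimodular, ranked) lattice. Everything else is the usual matroid closure exchange argument. The one point needing care is confirming that the rank function of $cM$ satisfies the semimodular inequality with $\mathrm{rk}(\hat 0)=0$, and that the meet of $X$ with an atom not below $X$ is $\hat 0$. I expect this to be the mild obstacle: one must check that the bottom element of $cM$ is the bottom $\hat 0$ of $M$ (the empty set of atoms), and that $a_0$ is genuinely an atom of $cM$, namely $\underline{\hat 0}$-type, rank one. I would verify this directly from Definition \ref{Def-Cone}, or simply cite the geometric-lattice property established in \cite{southerland2025region}.
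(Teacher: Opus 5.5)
Your proof is correct and is essentially the paper's argument. The paper likewise uses that $X\vee_{cM} a$ covers $X$, citing \cite[Lemma 5.6]{southerland2025region} where you derive it from semimodularity of the geometric lattice $cM$, and then concludes $X\vee_{cM} a=X\vee_{cM} b$ because nothing lies strictly between $X$ and a cover of $X$; your equal-rank comparison of the two joins is the same step phrased through the rank function.
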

\begin{proof}
Suppose $b\le X\vee_{cM} a$. Since $b\nleq X$, we have $X<X\vee_{cM} b\le X\vee_{cM} a$. Note that $X\vee_{cM} a$ covers $X$ by \cite[Lemma 5.6]{southerland2025region}. Therefore, there is no element strictly between $X$ and $X\vee_{cM} a$. This implies that $X\vee_{cM} a=X\vee_{cM} b$, and hence $a\le X\vee_{cM} b$. By symmetry, the converse holds as well.
\end{proof}

Next, we give an explicit characterization of the elements in the centralization of a geometric semilattice.
\begin{prop}\label{prop:bar-is-join}
For a geometric semilattice $M$, let \(cM\) be as in Definition \ref{Def-Cone}. For each \(X\in M\), we have $\underline{X}=X\vee_{cM} a_0$.
\end{prop}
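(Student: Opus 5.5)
Since $cM$ is ordered by inclusion, the plan is to prove two things: first that $\underline{X}$ is an upper bound of $X$ and $a_0$ in $cM$, and then that every upper bound of $X$ and $a_0$ in $cM$ contains $\underline{X}$. Together these identify $\underline{X}$ as the join $X\vee_{cM}a_0$; we do not need to know in advance that the join exists. Throughout, the elements of $M$ are viewed, as in Definition \ref{Def-Cone}, as the sets of atoms of $M$ lying below them, so that $X\subseteq \underline{X}=X\sqcup X^*\sqcup\{a_0\}$.

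The upper-bound part is immediate: $X\subseteq\underline{X}$ and $a_0\in\underline{X}$.

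For minimality, let $Z\in cM$ contain $X$ and $a_0$. Every $Y\in M$ is a set of atoms of $M$ and so does not contain $a_0$. Hence $Z$ has the form $\underline{Y}$ for some $Y\in M$, with $X\subseteq \underline{Y}=Y\sqcup Y^*\sqcup\{a_0\}$. It remains to show $X\subseteq Y$ and $X^*\subseteq Y^*$, since then $\underline{X}\subseteq\underline{Y}$.

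\begin{itemize}
\item \emph{The atoms of $X$ lie in $Y$.} Each atom $a\le X$ lies in $Y$ or in $Y^*$. If $a\in Y^*$, then $a$ and $Y$ have no common upper bound in $M$. So the key claim is that $X\subseteq Y\cup Y^*$ forces $X\cap Y^*=\emptyset$.

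To prove this, I would compare ranks, or use the geometric semilattice axiom. One route is to note that $X\wedge Y$ exists, since $M$ is a meet-semilattice, and that the atoms of $X$ inside $Y$ are exactly the atoms of $X\wedge Y$. If some atom $a$ of $X$ lies in $Y^*$, take an independent set $S$ of atoms of $X$ spanning $X$. Then apply the exchange axiom with $t=Y\wedge X$, which has rank less than ${\rm rk}(X)$ unless $X\le Y$. This produces an atom $b\in S$ with $b\nleq t$ and $t\vee b$ existing. Iterating, each step stays below $X$, where the structure is a geometric lattice. The aim is to build an element $W$ containing $Y\wedge X$ and eventually obtain a contradiction with $X\subseteq Y\cup Y^*$.

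This route looks awkward. A cleaner choice is to use the second axiom with $t=Y$ itself. Let $S$ be an independent spanning set of atoms of $X$. Suppose $X\not\le Y$, and also suppose ${\rm rk}(Y)<{\rm rk}(X)$. Then some $a\in S$ has $a\nleq Y$ and $Y\vee a$ existing, so $a\notin Y^*$. Since $a\in X\subseteq Y\cup Y^*$, this gives $a\in Y$, a contradiction. The rank hypothesis is the obstacle: we need to reduce to it. I would handle this by replacing $Y$ with $t=X\wedge Y$, which has rank less than ${\rm rk}(X)$ when $X\not\le Y$. The axiom then gives $a\in S$ with $a\nleq X\wedge Y$, so $a\notin Y$. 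Hence $a\in Y^*$, yet $(X\wedge Y)\vee a$ exists. This alone is not contradictory, so the correct tool is probably \cite[Lemma 5.6]{southerland2025region} or the geometric-lattice structure of $cM$ proved by Southerland--Southern--Zhou. If the join $X\vee_{cM}a_0$ exists (since $cM$ is a lattice), it covers $X$. It is then some $\underline{Y}\subseteq\underline{X}$, with $X\subseteq Y$ because $cM$ is ordered by inclusion.

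\item \emph{The atoms of $X^*$ lie in $Y^*$.} Once $X\le Y$, any atom with no common upper bound with $X$ has none with $Y$. So $X^*\subseteq Y^*$, and therefore $\underline{X}\subseteq\underline{Y}$.
\end{itemize}

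The main obstacle is showing $X\subseteq Y$ from $X\subseteq\underline{Y}$. I would first try to use that $cM$ is a geometric lattice, so that the join $J=X\vee_{cM}a_0$ exists and lies in $\underline{M}$, say $J=\underline{Y}\subseteq\underline{X}$. The point is then to derive $Y=X$ by rank: $J$ covers $X$, so ${\rm rk}(J)={\rm rk}(X)+1$, and ${\rm rk}(\underline{Y})={\rm rk}(Y)+1$ under the cone rank convention. Combining this with $Y\le X$, which follows from $\underline{Y}\subseteq\underline{X}$, gives $Y=X$.
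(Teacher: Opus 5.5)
Your upper-bound half is fine, and so is the second bullet ($X\le Y\Rightarrow X^*\subseteq Y^*$). The minimality step, however, has a genuine gap. You fix an arbitrary $Y\in M$ with $Z=\underline{Y}$ and try to prove $X\subseteq Y$, i.e.\ that $X\subseteq Y\cup Y^*$ forces $X\cap Y^*=\emptyset$. This is false, because an element of $\underline{M}$ generally has several representatives.

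Take $M=\{\hat 0,a_1,a_2\}$ with $a_1,a_2$ having no common upper bound (two parallel hyperplanes). Then $a_1^*=\{a_2\}$, $a_2^*=\{a_1\}$ and $\underline{a_1}=\underline{a_2}=\{a_0,a_1,a_2\}$. With $X=a_1$ and the representative $Y=a_2$ of $Z=\{a_0,a_1,a_2\}$, you get $X\subseteq Y\cup Y^*$ but $X\cap Y^*=\{a_1\}$ and $X\nleq Y$.

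The same example defeats your final paragraph. There $J=X\vee_{cM}a_0=\{a_0,a_1,a_2\}=\underline{a_2}$, and $\underline{Y}\subseteq\underline{X}$ does not give $Y\le X$. So ``$Y=X$'' is neither derivable nor true. Your exchange-axiom attempts in the first bullet are abandoned midway, so no valid argument for minimality remains.

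The missing idea is to place each $a\in X^*$ below $J=X\vee_{cM}a_0$ without choosing a representative. You already have $J\le\underline{X}$, since $\underline{X}$ is an upper bound and $cM$ is a lattice. The paper argues as follows:
\begin{itemize}
\item $X\vee_{cM}a$ cannot lie in $M$, since it would then be a common upper bound of $X$ and $a$ in $M$.
\item So it lies in $\underline{M}$ and contains $a_0$, whence $X<X\vee_{cM}a_0\le X\vee_{cM}a$.
\item Since $X\vee_{cM}a$ covers $X$ (Lemma 5.6 of Southerland--Southern--Zhou), the two joins coincide and $a\le J$.
\end{itemize}
This is exactly Lemma \ref{lem:atomic-exchange}.

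Comparing the ranks of $J$ and $\underline{X}$ directly would also work, but only if you know $\mathrm{rk}_{cM}(\underline{X})=\mathrm{rk}_M(X)+1$ independently. In the paper that identity is deduced from the present proposition, so invoking it as a ``cone rank convention'' is circular unless you justify it separately. Likewise, salvaging your representative-based approach would require first finding $Y'\ge X$ with $\underline{Y'}=Z$, as in Lemma \ref{Three-Properties}(c). The paper's proof of that lemma itself uses this proposition.
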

\begin{proof}
Since $\underline{X}=X\sqcup X^*\sqcup\{a_0\}$, we immediately have $X\vee_{cM}a_0\le\underline{X}$. It remains to prove that  $\underline{X}\le X\vee_{cM}a_0$. If $X^*=\emptyset$, then we directly have $\underline{X}\le X\vee_{cM}a_0$. If $X^*\ne\emptyset$, for any $a\in X^*$, then $a$ and $X$ have no common upper bound in $M$, and hence $X\vee_{cM} a\in\underline{M}$. It follows that \(a_0\le X\vee_{cM} a\). Applying Lemma \ref{lem:atomic-exchange}, we deduce $a\le X\vee_{cM} a_0$. Consequently, we have $X\vee_{cM}a\le X\vee_{cM}a_0$. This implies that $\underline{X}\le X\vee_{cM}a_0$. 
\end{proof}
   	
Now, we proceed to discuss several basic properties of a geometric semilattice and its cone.
\begin{lemma}\label{Three-Properties}
For a geometric semilattice $M$, let \(cM\) be as in Definition \ref{Def-Cone}. The following properties hold.
\begin{itemize}
\item[{\rm (a)}] If $X,Y\in M$ satisfy $Y\le\underline{X}\le \underline{Y}$ in $cM$, then $\underline{X}=\underline{Y}$.
\item[{\rm (b)}] If \(X,Y\in M\) satisfy $Y\le X\le\underline{Y}$ in $cM$, then $X=Y$.
\item[{\rm (c)}] If \(X,Y\in M\) satisfy $X\le \underline Y$ in $cM$, then there exists \(Z\in M\) such that $X\le Z$ and $\underline Z=\underline Y$.
\end{itemize}
\end{lemma}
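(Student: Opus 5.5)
Throughout, the plan is to work with elements of $cM$ as sets of atoms ordered by inclusion. Here $X\in M$ is identified with the set of atoms of $M$ below it, and $\underline{X}=X\sqcup X^*\sqcup\{a_0\}$. The tools are Proposition \ref{prop:bar-is-join}, namely $\underline{X}=X\vee_{cM}a_0$, together with the fact (from \cite{southerland2025region}) that $cM$ is a geometric lattice, so joins exist and ranks behave semimodularly. I would establish the three parts in the order (b), (a), (c), since (b) is the simplest and its flavour recurs.

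For (b), I would argue via the starred sets. Suppose $Y\le X\le\underline{Y}$ with $X,Y\in M$. Then $X\subseteq Y\sqcup Y^*$, so every atom $a$ of $X$ is either in $Y$ or in $Y^*$. An atom $a\le X$ has the common upper bound $X$ with $Y$, because $Y\le X$. Hence $a\notin Y^*$, so $X\subseteq Y$, and combined with $Y\le X$ this gives $X=Y$.

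For (a), from $Y\le\underline{X}$ and $a_0\le\underline{X}$ we get $\underline{Y}=Y\vee_{cM}a_0\le\underline{X}$ by Proposition \ref{prop:bar-is-join}. Together with the hypothesis $\underline{X}\le\underline{Y}$, antisymmetry gives $\underline{X}=\underline{Y}$. This part is essentially immediate.

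For (c), suppose $X\le\underline{Y}$, so $X\subseteq Y\sqcup Y^*$. Since atoms of $X$ share the upper bound $X$, the plan is to find $Z\in M$ with $Z\ge X$ and $\underline{Z}=\underline{Y}$. My first attempt is $Z=X\vee_{cM}Y$ when that join lies in $M$. In general, I would take $W=X\vee_{cM}Y\le\underline{Y}$ and argue by rank. If $W\in M$, set $Z=W$; then $Y\le Z\le\underline{Y}$, and applying (a) via $Y\le\underline{Z}$ gives $\underline{Y}\le\underline{Z}$. Conversely $Z\le\underline{Y}$ and $a_0\le\underline{Y}$ give $\underline{Z}\le\underline{Y}$, hence $\underline{Z}=\underline{Y}$. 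If instead $W\in\underline{M}$, then $W=\underline{U}$ for some $U$, and $W\le\underline{Y}$ forces $W=\underline{Y}$ by the argument of (a), since $Y\le W$. In that case I would pick $Z\in M$ maximal with $X\le Z\le\underline{Y}$. Such a $Z$ exists because $X$ itself qualifies and everything is finite.

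The main obstacle is then to show $\underline{Z}=\underline{Y}$ for this maximal $Z$. Certainly $\underline{Z}=Z\vee a_0\le\underline{Y}$. For the reverse, I would compare ranks in the geometric lattice $cM$: elements of $\underline{M}$ have rank one more than the rank of their $M$-base, since $a_0\nleq X$ and Proposition \ref{prop:bar-is-join} make $\underline{X}$ cover $X$. If $\underline{Z}<\underline{Y}$, then $\mathrm{rk}(Z)<\mathrm{rk}(Y)$. Choosing an independent set of atoms of $Y$ spanning $Y$, the second axiom of a geometric semilattice supplies an atom $a\le Y$ with $a\nleq Z$ and $Z\vee a\in M$. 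Then $Z<Z\vee a\le\underline{Y}$, contradicting the maximality of $Z$.
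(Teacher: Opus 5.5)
Your proof is correct, and it argues differently from the paper in each part. For (a) and (b) the paper uses only that $\underline{Y}=Y\vee_{cM}a_0$ covers $Y$ in $cM$. In (a), $Y<\underline X\le\underline Y$ then forces $\underline X=\underline Y$. In (b), $Y\le X<\underline Y$ (strict because $a_0\notin X$) forces $X=Y$. You instead get (a) from the least-upper-bound property of $Y\vee_{cM}a_0$ plus antisymmetry, and (b) directly from the definition of $Y^*$. Your (b) therefore needs no lattice theory at all.

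In (c) both proofs take a maximal $Z$, but with different constraints:
the paper maximizes over $Z\in M$ with $X\subseteq Z\subseteq X\cup Y$ and claims every $a\in Y\setminus Z$ lies in $Z^*$ by maximality;
you maximize over $X\le Z\le\underline Y$.

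Your constraint is the right one. The paper's maximality step, as written, can fail, because $Z\vee_M a$ may contain atoms of $Y^*$ outside $X\cup Y$. For instance, in $\mathbb{R}^2$ take the lines $n\colon y=0$, $m\colon x=0$, $\ell\colon y=1$, $k\colon y=x+1$, with $Y=\{m,n\}$ and $X=\{\ell\}$. The only $Z$ the paper's constraint allows is $\{\ell\}$, and then $\underline Z=\{\ell,n,a_0\}\neq\underline Y=\{\ell,m,n,k,a_0\}$. Your choice gives $Z=\{\ell,m,k\}$ with $\underline Z=\underline Y$.

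Once $Z\le\underline Y$ is imposed, your rank count and the augmentation axiom are heavier than needed. If $a\in Y\setminus Z$ and $Z\vee_M a$ existed, it would be an element of $M$ strictly above $Z$ and below $\underline Y$, contradicting maximality. Hence $Y\subseteq Z\sqcup Z^*$, and $\underline Y\le\underline Z$ follows at once. Your preliminary case split on $W=X\vee_{cM}Y$ is also redundant, since the maximal-$Z$ argument covers both cases.

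One step should be made explicit, and either finish needs it: $Z\vee_M a\le\underline Y$. Since $\underline Y$ bounds both $Z$ and $a$, we have $Z\vee_{cM}a\le\underline Y$. Moreover $Z\vee_{cM}a=Z\vee_M a$, because every element of $cM$ below a member of $M$ lies in $M$ (members of $\underline M$ contain $a_0$).
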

\begin{proof}
From Proposition \ref{prop:bar-is-join}, we have $\underline{Y}=Y\vee_{cM} a_0$ and $\underline{X}=X\vee_{cM} a_0$. Since $X,Y\in M$ satisfy $Y\le \underline{X}\le \underline{Y}$, we have that $Y\ne \underline{X}$, and $\underline{Y}$ covers $Y$ in $cM$ via \cite[Lemma 5.6]{southerland2025region}. It follows that $\underline{X}=\underline{Y}$. Similar to the proof of part (a), one can show that part (b) holds. It remains to prove part (c). Choose a subset $Z\subseteq X\cup Y$ to be the maximal element of  $M$ containing $X$. Consequently, we have $Z\le \underline Y$ in $cM$. It follows from Proposition \ref{prop:bar-is-join} that $\underline Z\le\underline Y$ in $cM$. Conversely, taking an element $a\in Y$, if $a\in Z$, then $a\in\underline Z$. If $a\notin Z$, then $Z$ and $a$ have no common upper bound in $M$ from the maximality of $Z$. Thus, $a\in Z^*$ and hence $a\in\underline Z$. This implies that $\underline Y\le\underline Z$. Consequently, $\underline Z=\underline Y$. 
\end{proof}

With the preparations above, we now have all the tools in place to prove our two main structure theorems, which establish the relationship between certain subposets of a geometric semilattice $M$ and the corresponding subposets of its cone $cM$. These also serve as key ingredients in the proof of Theorem \ref{main thm}.
\begin{theorem}\label{isomo_1}
For a geometric semilattice $M$, let \(cM\) be as in Definition \ref{Def-Cone}, and  \(X\in \underline M\). Then
\[
(cM)^X=\underline M^X\quad{\rm and}\quad(cM)_X=c(M_{(X)}).
\]
\end{theorem}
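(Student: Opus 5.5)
The first equality is immediate from the definitions, and the second reduces to one comparison between joins in $M$ and in $cM$. Throughout, write $X=\underline{Z}$ with $Z\in M$; every element of $cM$ is a set of atoms (possibly together with $a_0$), ordered by inclusion.

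\emph{First equality.} If $Y\in cM$ satisfies $Y\ge X$, then $a_0\in X\subseteq Y$. Elements of $M$ never contain $a_0$, so $Y\in\underline{M}$. Hence $(cM)^X\subseteq \underline M^X$, and the reverse inclusion is trivial since $\underline M\subseteq cM$ carries the induced order.

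\emph{Second equality: matching the underlying sets.} By Definition \ref{Def-Cone}, $M_{(X)}=\{Y\in M: Y\le X\}$, a subposet of $M$. Its atoms are the atoms $a$ of $M$ with $a\le X$, i.e.\ $a\in Z\sqcup Z^*$. For $Y\in M_{(X)}$, write $Y^{*}_{\rm loc}$ for the set of atoms of $M_{(X)}$ that have no common upper bound with $Y$ inside $M_{(X)}$. Then
\[
c(M_{(X)})=M_{(X)}\sqcup\{Y\sqcup Y^*_{\rm loc}\sqcup\{a_0\}: Y\in M_{(X)}\},
\]
again ordered by inclusion. The elements of $(cM)_X$ lying in $M$ are exactly $M_{(X)}$. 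The remaining elements are the $\underline{W}\le X$ with $W\in M$, and each such $W$ satisfies $W\le\underline W\le X$, so $W\in M_{(X)}$. Hence it suffices to prove
\[
\underline{Y}=Y\sqcup Y^*_{\rm loc}\sqcup\{a_0\}\quad\text{for every } Y\in M_{(X)}.
\]
Since both posets are ordered by inclusion, the two sets then coincide as posets. By Proposition \ref{prop:bar-is-join}, $\underline Y=Y\vee_{cM}a_0\le X$, so every atom of $\underline Y$ lies in $X$; in particular $Y^*$ consists of atoms of $M_{(X)}$. It therefore remains to show $Y^*=Y^*_{\rm loc}$. Concretely, for $Y\le X$ in $M$ and an atom $a\le X$, we must show that $Y$ and $a$ have a common upper bound in $M$ if and only if they have one below $X$.

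\emph{Key step: comparing joins.} This equivalence is the main obstacle. The direction ``$\Leftarrow$'' is trivial. For ``$\Rightarrow$'', the case $a\le Y$ is trivial, so assume $a\nleq Y$ and that $Y,a$ have a common upper bound in $M$. I plan to show that $Y\vee_{cM}a$ lies in $M$; since $Y,a\le X$, it then lies below $X$ and gives the required upper bound in $M_{(X)}$. By \cite[Lemma 5.6]{southerland2025region}, $Y\vee_{cM}a$ covers $Y$ in $cM$. Suppose instead that $Y\vee_{cM}a=\underline{W}$ for some $W\in M$. Then $Y\le\underline W$, and $\underline Y=Y\vee_{cM}a_0\le \underline W$. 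Since $Y<\underline Y\le\underline W$ and $\underline W$ covers $Y$, we get $\underline Y=\underline W$, so $a\le \underline Y=Y\sqcup Y^*\sqcup\{a_0\}$. But $a\notin Y$ and $a\ne a_0$, so $a\in Y^*$. This contradicts the assumption that $Y$ and $a$ have a common upper bound in $M$. Hence $Y\vee_{cM}a\in M$, which gives $Y^*=Y^*_{\rm loc}$ and completes the identification $(cM)_X=c(M_{(X)})$.
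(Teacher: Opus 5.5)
Your proof is correct. It shares the paper's skeleton: split $(cM)_X$ into $M_{(X)}$ and its $\underline{M}$-part, then show that the global set $Y^*$ agrees with the one computed inside $M_{(X)}$. The second equality, however, goes by a somewhat different route, in two ways.

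\emph{Handling atoms outside $X$.} The paper proves $\{Y\in\underline M: Y\le X\}=\{\underline{Y_X}: Y_X\in M_{(X)}\}$ by two separate inclusions. In the reverse inclusion it deals with atoms $a\notin X$ by invoking Lemma~\ref{Three-Properties}(c): it finds $Z\ge Y_X$ with $\underline Z=X$ and uses $Z\vee_M a$ as a common upper bound. You instead use Proposition~\ref{prop:bar-is-join} to get $\underline Y=Y\vee_{cM}a_0\le X$ for every $Y\in M_{(X)}$. This places $Y^*$ inside the atom set of $M_{(X)}$ and disposes of every atom outside $X$ at once. Both inclusions then collapse into the single pointwise identity $\underline Y=Y\sqcup Y^*_{\rm loc}\sqcup\{a_0\}$, and Lemma~\ref{Three-Properties}(c) is no longer needed.

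\emph{The join comparison.} Your key step justifies something the paper only asserts. The paper writes that $Z\vee_M a\le X$ holds in $cM$ (and, in the converse, $Y_X\vee_M a\le X$) whenever the join exists in $M$. This amounts to saying that $\vee_M$ agrees with $\vee_{cM}$ there, which the paper never argues. Your covering argument is valid, but a shorter one works. If $U\in M$ is any common upper bound of $Y$ and $a$, then $Y\vee_{cM}a\subseteq U$, so $a_0\notin Y\vee_{cM}a$. Hence $Y\vee_{cM}a\in M$, and it lies below $X$ because $X$ is an upper bound of $Y$ and $a$ in $cM$.
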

\begin{proof}
We first show that $(cM)^X=\underline M^X$. Since \(X\in \underline M\), we have 
\[
(cM)^X=\{Y\in cM:X\le Y\}=\{Y\in \underline M:X\le Y\}=\underline M^X.
\]

Next, we prove that $(cM)_X=c(M_{(X)})$. Note that $M_{(X)}=\{Y\in M:Y\le X\}$ and $(cM)_X$ can be decomposed into the following two parts:
\[
(cM)_X=M_{(X)}\sqcup\{Y\in\underline M:Y\le X\}.
\]
On the other hand, the set $A_X$ of atoms of $M_{(X)}$ is $A_X:=X-\{a_0\}$. From Definition \ref{Def-Cone}, the cone $c(M_{(X)})$ of $M_{(X)}$ is given by
\[
c(M_{(X)})=M_{(X)}\sqcup \{\underline{Y_X}:Y_X\in M_{(X)}\},
\]
where $Y_X^*=\{a\in A_X:Y_X \text{ and } a \text{ have no common upper bound in } M_{(X)}\}$ and $\underline{Y_X}=Y_X\sqcup Y_X^*\sqcup\{a_0\}$. By comparing $(cM)_X$ with $c(M_{(X)})$, it remains to verify
that
\[
\{Y\in\underline M:Y\le X\}= \{\underline{Y_X}:Y_X\in M_{(X)}\}.
\]

If $Y\le X$ with $Y\in\underline{M}$, then there exists $Z\in M$ such that $Y=\underline{Z}$, $Z^*\subseteq A_X$, and $Z\le X$ in $cM$. Recall that $Z^*$ is defined as the set of atoms $a$ of $M$ such that $Z$ and $a$ have no common upper bound in $M$. Consequently, for any $a\in Z^*$, $Z$ and $a$ have no common upper bound in $M_{(X)}$ as well. On the other hand, for any $a\in A_X-(Z\sqcup Z^*)$, $Z$ and $a$ have a common upper bound in $M$, and $Z\vee_M a\le X$ holds in $cM$. This means that $Z\vee_M a\in M_{(X)}$ for any $a\in A_X-(Z\sqcup Z^*)$, which is therefore a common upper bound of $Z$ and $a$ in $M_{(X)}$. Summarizing the above arguments, we conclude that $Z^*$ is exactly the set of atoms $a$ of $M_{(X)}$ such that $Z$ and $a$ have no common upper bound in $M_{(X)}$. Thus, $Y=Z\sqcup Z^*\sqcup\{a_0\}\in \{\underline{Y_X}:Y_X\in M_{(X)}\}$.

Conversely, for any $Y=\underline{Y_X}$ with $Y_X\in M_{(X)}$, since $Y_X\le X$, according to  part (c) in Lemma \ref{Three-Properties}, there exists $Z\in M$ for which $Y_X\le Z$ in $M$ and  $X=\underline{Z}$. On one hand, for any $a\in A_X-(Y_X\sqcup Y_X^*)$, since $Y_X$ and $a$ have a common upper bound in $M_{(X)}$,  it follows that $Y_X$ and $a$ naturally have a common upper bound in $M$. On the other hand, for any $a\in A-A_X$, as $Z\vee_M a$ always exists, it is a common upper bound of  $Y_X$ and $a$ in $M$. Thus, $Y_X\vee_M a$ always exists for all $a\in A-(Y_X\sqcup Y_X^*)$. Finally, we claim that $Y_X$ and $a$ have no common upper bound in $M$ for any $a\in Y_X^*$. Otherwise, $Y_X\vee_M a$ exists in $M$ and $Y_X\vee_M a\le X$. This implies that $Y_X\vee_M a\in M_{(X)}$, contradicting the definition of $Y_X^*$. Collecting the previous arguments, we conclude that $Y_X^*$ is exactly the set of atoms $a$ of $M$ such that $Y_X$ and $a$ have no common upper bound in $M$. Consequently, $Y\in \{Z\in\underline M:Z\le X\}$. This completes the proof.
\end{proof}

\begin{theorem}\label{isomo_2}
For a geometric semilattice $M$, let \(cM\) be as in Definition \ref{Def-Cone}, and \(X\in M\). Then
\[
(cM)_X=M_X\quad{\rm and}\quad(cM)^X=c(M^X).
\]
\end{theorem}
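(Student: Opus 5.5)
We prove the two identities separately. For the first, $(cM)_X=M_X$, fix $X\in M$ and take any $Y\in cM$ with $Y\le X$, i.e.\ $Y\subseteq X$ as sets. Every element of $\underline{M}$ contains the extra atom $a_0$, while $X\in M$ is a set of atoms of $M$ and so does not contain $a_0$. Hence $Y\notin\underline M$, so $Y\in M$. The order on $M$, viewed inside $cM$, is inclusion; this is how $M$ sits in $cM$ in Definition \ref{Def-Cone}. Therefore $(cM)_X=\{Y\in M:Y\le X\}=M_X$ as posets.

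For the second identity, $(cM)^X=c(M^X)$, we first make precise how $M^X$ is a geometric semilattice with its own cone. Its atoms are the elements covering $X$, namely the joins $X\vee_M a$ for atoms $a\notin X$ of $M$ such that $X\vee_M a$ exists. We identify each $Y\in M^X$ with the set of atoms of $M^X$ lying below it. The cone $c(M^X)$ is then $M^X\sqcup\{\underline{Y}^{X}:Y\in M^X\}$. Here $\underline{Y}^{X}$ consists of the atoms of $M^X$ below $Y$, the atoms of $M^X$ having no common upper bound with $Y$ in $M^X$, and the new atom $a_0$. With this identification, $c(M^X)$ is compared with $(cM)^X$ via the bijection that is the identity on $M^X$ and sends $\underline{Y}^X\mapsto\underline{Y}$. (Up to relabeling atoms $X\vee a\leftrightarrow\{a\}\cup X$, this is the natural isomorphism; we prove it is an order isomorphism.)

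We then decompose $(cM)^X$ as
\[
(cM)^X=M^X\sqcup\{Z\in\underline M: X\le Z\}.
\]
The first key step is to show that $\{Z\in\underline M:X\le Z\}=\{\underline Y:Y\in M^X\}$. One inclusion is immediate, since $Y\ge X$ gives $\underline Y\ge Y\ge X$. For the other, suppose $X\le\underline{W}$ with $W\in M$. Lemma \ref{Three-Properties}(c) gives $Z\in M$ with $X\le Z$ and $\underline Z=\underline W$, so $\underline W=\underline Z$ with $Z\in M^X$. We also need this parametrization to be faithful. If $Y,Y'\in M^X$, then $\underline Y\le\underline{Y'}$ in $cM$ should hold iff $\underline{Y}^X\le\underline{Y'}^X$ in $c(M^X)$. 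Similarly, for $Y'\in M^X$ and $Y\in M^X$, the relation $Y'\le\underline Y$ should hold iff $Y'$ lies below $\underline Y^X$.

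The second key step, which we expect to be the main obstacle, is translating the "no common upper bound" condition between $M$ and $M^X$. Take $Y\in M^X$ and an atom $a$ of $M$ with $a\notin Y$, and consider two cases.
\begin{itemize}
\item[(i)] Suppose $a\in X^*$. Then $a\in Y^*$ automatically, and $a\in\underline X\le\underline Y$. These atoms have no counterpart in $M^X$, but they are forced on both sides: by Proposition \ref{prop:bar-is-join}, $\underline Y=Y\vee_{cM}a_0$, and $Y\vee_{cM}a_0$ in $(cM)^X$ corresponds to $Y\vee a_0$ in $c(M^X)$.
\item[(ii)] Suppose $a\notin X^*$, so that $X\vee_M a=:b$ exists and is an atom of $M^X$. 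Then $Y$ and $a$ have a common upper bound in $M$ iff $Y$ and $b$ have a common upper bound in $M^X$, because any upper bound of $Y$ and $a$ is automatically $\ge X$.
\end{itemize}
This shows that $Y^*$ minus $X^*$ matches $(Y^{X})^*$ under $a\mapsto X\vee a$.

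Combining these, an element $Y'\in M^X$ lies below $\underline Y$ iff every atom of $Y'$ outside $X$ lies in $Y\cup Y^*$, iff every atom of $M^X$ below $Y'$ lies in $\underline Y^X$. An analogous argument handles comparisons between two centralized elements. Proposition \ref{prop:bar-is-join} can be used in both $cM$ and $c(M^X)$ to reduce every order relation to joins with $a_0$, which makes the verification uniform. Since both posets are ordered by inclusion after this relabeling, the bijection preserves and reflects order, yielding $(cM)^X\cong c(M^X)$. The delicate point throughout is the atom relabeling $a\mapsto X\vee_M a$, which is not injective, together with the role of $X^*$. We would handle both by working with the join characterization rather than raw sets.
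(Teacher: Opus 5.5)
Your proposal is correct and follows essentially the same route as the paper. The shared steps are the decomposition $(cM)^X=M^X\sqcup\{\underline Y:Y\in M^X\}$ via Lemma \ref{Three-Properties}(c), the atom correspondence $a\mapsto X\vee_M a$ on $A-(X\sqcup X^*)$, the fact that $Y$ and $a$ have a common upper bound in $M$ iff $Y$ and $X\vee_M a$ do in $M^X$, and the absorption of $X^*$ into $\underline X$, which plays the role of the new atom of $c(M^X)$. You are more explicit than the paper in checking that the correspondence preserves and reflects order, which also makes $\underline{Y}^X\mapsto\underline Y$ well defined even though $Y\mapsto\underline Y$ is not injective; your parenthetical relabeling $X\vee a\leftrightarrow\{a\}\cup X$ is wrong as a set identity, but your argument never actually uses it.
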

\begin{proof}
Since $X\in M$, we directly derive
\[
(cM)_X=\{Y\in M:Y\le X\}=M_X.
\]
Therefore, the first part holds. 

Next, we verify that $(cM)^X=c(M^X)$. As $X\in M$,  $(cM)^X$ can be decomposed as:
\[
(cM)^X=M^X\sqcup\{Y\in\underline M:X\le Y\}.
\]
Together with part (c) of Lemma \ref{Three-Properties}, we rewrite $(cM)^X$ as: 
\[
(cM)^X=M^X\sqcup\{\underline Y: Y\in M^X\}.
\]
On the other hand, let $A^X:=A-(X\sqcup X^*)$. Then $X\vee_M a$ covers $X$ in $M$ for each $a\in A^X$, and hence $\{X\vee_M a:a\in A^X\}$ is exactly the set of all atoms of $M^X$.
For every $Y^X\in M^X$, $(Y^X)^*$ is given by
\[
(Y^X)^*=\{a\in A^X: Y^X \text{ and } (X\vee_M a) \text{ have no common upper bound in } M^X\}\sqcup X.
\] 
We now set $\underline{Y^X}=Y^X\sqcup (Y^X)^*\sqcup\{A_0\}$ for every $Y^X\in M^X$, where $A_0=X\vee_{cM} a_0$ denotes the distinguished additional atom in the construction of the cone \(c(M^X)\). Indeed, from parts (a) and (b) of Lemma \ref{Three-Properties},  $A_0=X\vee_{cM} a_0=\underline X$ is the unique element of $cM-M$ covering $X$. Therefore $A_0 =\underline{X}= X \sqcup X^* \sqcup \{a_0\}$. Notice from $X\le Y^X$ in $M$ that for any $a\in A^X$, $Y^X$ and $(X\vee_M a)$ have no common upper bound in  $M^X$ if and only if $Y^X$ and $a$ have no common upper bound in $M$. Thus, $\underline{Y^X}$ can be expressed as 
\begin{equation}\label{A=B}
\underline{Y^X}=Y^X\sqcup \{a\in A^X: Y^X\vee_M a \text{ does not exist in }  M\}\sqcup X^*\sqcup\{a_0\},
\end{equation}
and
\[
c(M^X)=M^X\sqcup\{\underline{Y^X}:Y^X\in M^X\}.
\]	
Comparing $(cM)^X$ and $c(M^X)$, it is enough to show that
\[
\{\underline Y: Y\in M^X\}=\{\underline{Y^X}:Y^X\in M^X\}.
\]

First consider $\underline Y$ with $Y\in M^X$. Since $X\le Y$ in $M$, it is straightforward to check that $X^*\subseteq Y^*$, $Y^*-X^*\subseteq A^X$ and
\[
A^X=A^Y\sqcup (Y-X)\sqcup(Y^*-X^*).
\]
It follows that $Y^*-X^*$ is exactly the set of atoms $a$ in $A^X$ such that $Y$ and $a$ have no common upper bound in $M$. Together with \eqref{A=B}, we obtain that $\underline Y\in \{\underline{Y^X}:Y^X\in M^X\}$.

Conversely, take $\underline{Y^X}$ with $Y^X\in M^X$. Then $X\le Y^X$ in $M$, and it suffices to prove that $\underline{Y^X}-Y^X-\{a_0\}$ is the set of atoms $a$ of $M$ for which $Y^X$ and $a$ have no common upper bound in $M$. It follows from \eqref{A=B} that for any $a\in \underline{Y^X}-Y^X-\{a_0\}$, $Y^X\vee_M a$ does not exist in $M$. On the other hand, we have $A-\underline{Y^X}\subseteq A^X$ since $X\sqcup X^*\subseteq \underline{Y^X}$. Thus, for any $a\in A-\underline{Y^X}$, $Y^X$ and $X\vee_M a$ have a common upper bound in $M^X$, which implies that $Y^X$ and $a$ also have a common upper bound in $M$. Therefore, $\underline{Y^X}-Y^X-\{a_0\}$ is indeed the set of atoms $a$ of $M$ such that $Y^X$ and $a$ have no common upper bound in $M$. Consequently, $\underline{Y^X}\in \{\underline Y: Y\in M^X\}$. This completes the proof.
\end{proof}

\section{Proof of Theorem \ref{main thm}}\label{Sec3}
In this section, we aim to verify Theorem \ref{main thm}. To this end, we first derive the counterparts of the convolution formulae for multiplicative characteristic polynomials of matroids in \cite{kung2004} and semimatroids in \cite{fu2025semimatroids} in the setting of geometric semilattices.
\begin{prop}\label{convolution over semimatroid}
Let $M$ be a finite geometric semilattice of rank $r$. Then
\[
\chi(M,st)=\sum_{X\in M}s^{r-{\rm rk}_M(X)}\chi({M^X},t)\chi({M_X},s).
\]
\end{prop}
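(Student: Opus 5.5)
The plan is to expand both sides in terms of the Möbius function and reduce the identity to the multiplicativity of $\mu$ across intervals.

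First, fix the relevant ranks. For $X\in M$, the dual ideal $M^X$ is a geometric semilattice of rank $r-{\rm rk}_M(X)$, with $\hat 0=X$ and rank function ${\rm rk}_{M^X}(Y)={\rm rk}_M(Y)-{\rm rk}_M(X)$. The ideal $M_X$ is a geometric lattice of rank ${\rm rk}_M(X)$. Unwinding the definitions, the summand indexed by $X$ on the right-hand side becomes
\[
s^{r-{\rm rk}(X)}\sum_{Y\ge X}\mu(X,Y)\,t^{r-{\rm rk}(Y)}\sum_{Z\le X}\mu(\hat 0,Z)\,s^{{\rm rk}(X)-{\rm rk}(Z)} .
\]
This simplifies to
\[
\sum_{Z\le X\le Y}\mu(\hat 0,Z)\mu(X,Y)\,s^{r-{\rm rk}(Z)}t^{r-{\rm rk}(Y)}.
\]
Here we use that Möbius functions of intervals inside $M^X$ and $M_X$ agree with those of $M$, since intervals are unchanged.

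Summing over $X$, the right-hand side becomes
\[
\sum_{Z\le Y}\mu(\hat 0,Z)\,s^{r-{\rm rk}(Z)}t^{r-{\rm rk}(Y)}\sum_{Z\le X\le Y}\mu(X,Y).
\]
The inner sum equals $\delta_{Z,Y}$ by the defining recursion of $\mu$, written in its dual form $\sum_{Z\le X\le Y}\mu(X,Y)=\delta_{Z,Y}$. The latter holds in any finite poset because $\mu$ is a two-sided inverse of $\zeta$ in the incidence algebra. Only the terms with $Z=Y$ survive, leaving
\[
\sum_{Y}\mu(\hat 0,Y)(st)^{r-{\rm rk}(Y)}=\chi(M,st).
\]

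The argument is essentially formal. The only point needing care is the bookkeeping that ${\rm rk}(M^X)=r-{\rm rk}_M(X)$, i.e.\ that every maximal element of $M^X$ has rank $r$ in $M$. A geometric semilattice need not be pure a priori, so I must check this against the definition of $\chi$ being used.

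To be safe, I would pass to $cM$. Theorem \ref{isomo_2} gives $(cM)^X=c(M^X)$. The cone has rank $r+1$, and by Lemma 5.6 of \cite{southerland2025region} the element $X\vee a_0=\underline X$ covers $X$. Together these give ${\rm rk}(c(M^X))={\rm rk}(M^X)+1=r+1-{\rm rk}(X)$.

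Alternatively, the identity can be verified with the exponents written purely via ranks, since the powers of $t$ and $s$ combine consistently regardless. I expect this rank verification to be the main (mild) obstacle; the Möbius cancellation itself is routine.
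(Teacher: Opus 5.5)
Your proposal is correct and is essentially the paper's proof. Both expand $\chi(M^X,t)$ and $\chi(M_X,s)$ via the M\"obius function, interchange the sums, and collapse them with $\sum_{Z\le X\le Y}\mu(X,Y)=\delta_{Z,Y}$.

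The rank issue you flag, which the paper uses silently, has a more direct resolution than the cone: geometric semilattices are pure. A maximal $t$ of rank below $r$ would, by the second axiom applied to an independent set of $r$ atoms, admit an atom $a$ with $t<t\vee_M a$, so ${\rm rk}(M^X)=r-{\rm rk}_M(X)$. Contrary to your closing remark, this fact is genuinely needed; otherwise $\chi(M^X,t)$ would be off by the factor $t^{{\rm rk}(M^X)-r+{\rm rk}_M(X)}$.
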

\begin{proof}
Note that \(\chi(M^X,t)\) and \(\chi(M_X,s)\) can be expressed respectively as
\[
\chi(M^X,t)=\sum_{X\le Y}\mu(X,Y)t^{r-{\rm rk}_M(Y)}\;{\rm and}\; \chi(M_X,s)=\sum_{Z\le X}\mu(\hat0,Z)s^{{\rm rk}_M(X)-{\rm rk}_M(Z)}.
\]
It follows that the right-hand side can be written as
\begin{align*}
&\sum_{X\in M}s^{r-{\rm rk}_M(X)}\Big(\sum_{X\le Y}\mu(X,Y)t^{r-{\rm rk}_M(Y)}\Big)
\Big(\sum_{Z\le X}\mu(\hat0,Z)s^{{\rm rk}_M(X)-{\rm rk}_M(Z)}\Big)\\
&=\sum_{X\in M}\sum_{Z\le X\le Y}\mu(\hat0,Z)\mu(X,Y)s^{r-{\rm rk}_M(Z)}t^{r-{\rm rk}_M(Y)}\\
&=\sum_{Z\in M}\mu(\hat0,Z)s^{r-{\rm rk}_M(Z)}\sum_{Z\le X\le Y}\mu(X,Y)t^{r-{\rm rk}_M(Y)}.
\end{align*}
According to the definition of the M\"obius function, the sum $\sum_{Z\le X\le Y}\mu(X,Y)$ equals $0$ if $Z\ne Y$. Consequently, the aforementioned sum is simplified as
\[
\sum_{Z\in M}\mu(\hat0,Z)(st)^{r-{\rm rk}_M(Z)}=\chi(M,st),
\]
which completes the proof.
\end{proof}
   	
The following Crosscut Theorem provides a specific way to compute the M\"obius function of a lattice, which enables us to derive an alternative expression of the characteristic polynomial of a geometric semilattice.
\begin{prop}[Crosscut Theorem, {\cite[Theorem 3]{rota1964}}]\label{thm:crosscut}
Let $L$ be a finite lattice with minimum element $\hat 0$, maximum element $\hat 1$, and atom set $A$. Then
\[
\mu(\hat 0,\hat 1)=\sum_{B\subseteq A,\,\bigvee B=\hat 1}(-1)^{|B|}.
\]
\end{prop}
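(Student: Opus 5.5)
The plan is to prove a slightly stronger statement for every element, not just for $\hat 1$, and then conclude by Möbius inversion. For each $x\in L$ define
\[
f(x):=\sum_{B\subseteq A,\ \bigvee B=x}(-1)^{|B|},
\]
with the convention $\bigvee\emptyset=\hat 0$. Since $L$ is a finite lattice, every subset $B\subseteq A$ has a join in $L$. Hence the sets $\{B\subseteq A:\bigvee B=x\}$, for $x\in L$, partition the power set of $A$. The goal is to show $f(x)=\mu(\hat 0,x)$ for all $x$; the theorem is the case $x=\hat 1$.

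The key step is to compute the cumulative sum $g(x):=\sum_{y\le x}f(y)$. For a subset $B\subseteq A$ we have $\bigvee B\le x$ if and only if every atom in $B$ lies below $x$. Therefore
\[
g(x)=\sum_{B\subseteq A_{\le x}}(-1)^{|B|},\qquad A_{\le x}:=\{a\in A: a\le x\}.
\]
This is the alternating sum over all subsets of a finite set, so it equals $1$ if $A_{\le x}=\emptyset$ and $0$ otherwise. In a finite lattice, every $x\ne\hat 0$ lies above at least one atom: take a minimal element of the nonempty finite set $(\hat 0,x]$. Consequently $g(x)=\delta(\hat 0,x)$.

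Now apply Möbius inversion on the finite poset $L$:
\[
f(x)=\sum_{y\le x}\mu(y,x)\,g(y)=\mu(\hat 0,x).
\]
Taking $x=\hat 1$ gives the stated formula. Equivalently, $f$ satisfies the same recursion $\sum_{y\le x}f(y)=\delta(\hat 0,x)$ that defines $\mu(\hat 0,\cdot)$, so by induction on $x$ the two functions agree.

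I do not expect a serious obstacle. The only points needing care are bookkeeping: the empty set $B=\emptyset$ must be assigned to $\hat 0$, the existence of joins must come from the lattice hypothesis, and the degenerate case $\hat 0=\hat 1$ must be handled. In that case both sides equal $1$, since only $B=\emptyset$ contributes.
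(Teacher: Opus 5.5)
The paper gives no proof of this proposition; it quotes it from Rota, so there is no in-paper argument to compare against. Your proof is correct and complete. Each of its steps is right:
\begin{itemize}
\item the sets $\{B\subseteq A:\bigvee B=x\}$ partition $2^A$;
\item the identity $\sum_{y\le x}f(y)=\sum_{B\subseteq A_{\le x}}(-1)^{|B|}=\delta(\hat 0,x)$ holds;
\item every $x\ne\hat 0$ lies above an atom, while $A_{\le \hat 0}=\emptyset$.
\end{itemize}
Since $\sum_{y\le x}f(y)=\delta(\hat 0,x)$ is literally the recursion by which the paper defines $\mu(\hat 0,\cdot)$, your closing induction remark makes the argument independent of the two-sided form of M\"obius inversion.

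Your route also buys something relative to how the paper uses the result. You prove the pointwise identity $\mu(\hat 0,x)=\sum_{B\subseteq A,\,\bigvee B=x}(-1)^{|B|}$ for every $x$ at once, which is exactly the form needed for Proposition \ref{semilattice}. Moreover, your computation never really uses that $L$ is a lattice. In any finite meet-semilattice, a set $B\subseteq A_{\le x}$ has the upper bound $x$, hence has a join (the meet of all its upper bounds). So the same argument gives Proposition \ref{semilattice} directly for $M$, without first passing to the principal order ideals $M_X$ as the paper does.

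Rota's theorem is more general in a different direction: it covers arbitrary crosscuts and counts subsets whose join is $\hat 1$ and whose meet is $\hat 0$. The atomic case you prove is all the paper needs.
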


Let $M$ be a geometric semilattice with atom set $A$. It is worth noting that every principal order ideal $M_X$ for $X\in M$ forms a geometric lattice. By applying the Crosscut Theorem, we immediately obtain
\[
\mu(\hat 0,X)
=
\sum_{B\subseteq A,\,\bigvee B=X}(-1)^{|B|}.
\]
Substituting this into the definition of the characteristic polynomial directly yields the following formula. This allows us to establish the connection between characteristic polynomials of a geometric semilattice and its cone.
\begin{prop}\label{semilattice}
Let $M$ be a geometric semilattice of rank $r$ with atom set $A$. Then the characteristic polynomial $\chi(M,t)$ can be expressed as
\[
\chi(M,t)=\sum_{B\subseteq A,\,\bigvee B\text{ exists }}(-1)^{|B|}t^{r-{\rm rk}_M(\bigvee B)}.
\]
\end{prop}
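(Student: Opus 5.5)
The proof is a direct substitution of the Crosscut Theorem into the definition of $\chi(M,t)$, followed by regrouping the sum by the join of each subset of atoms. We start from
\[
\chi(M,t)=\sum_{X\in M}\mu(\hat0,X)\,t^{r-{\rm rk}_M(X)} .
\]
For each $X\in M$, the Möbius value $\mu(\hat0,X)$ is computed inside the principal order ideal $M_X$. Since $M$ is a geometric semilattice, $M_X$ is a geometric lattice. Hence Proposition \ref{thm:crosscut} applies to $L=M_X$, whose minimum is $\hat0$, whose maximum is $X$, and whose atom set is $\{a\in A: a\le X\}$. This gives
\[
\mu(\hat0,X)=\sum_{B\subseteq A,\ B\le X,\ \bigvee_{M_X} B=X}(-1)^{|B|},
\]
where $B\le X$ means every element of $B$ lies below $X$.

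The only point needing care is to identify the join taken in the lattice $M_X$ with the join $\bigvee B$ taken in $M$.
\begin{itemize}
\item Suppose $B\subseteq A$ has all elements below $X$. Then $B$ has a common upper bound in $M$.
\item Because $M$ is a meet-semilattice, the set of upper bounds of $B$ lying in $M_X$ has a least element. This element is the meet of those upper bounds; it is again an upper bound of $B$ and lies below $X$.
\item In fact, the set of all upper bounds of $B$ in $M$ has a least element, namely the meet of all upper bounds. This is legitimate because $M$ is finite and any two upper bounds have a meet that is still an upper bound.
\item Therefore $\bigvee B$ exists in $M$ whenever $B$ has an upper bound, and it coincides with $\bigvee_{M_X}B$ whenever $B\le X$.
\end{itemize}
This justifies the paper's phrase ``$\bigvee B$ exists''. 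It also shows that the condition $\bigvee_{M_X}B=X$ is equivalent to $\bigvee_M B=X$; the condition $B\le X$ is then automatic.

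Substituting into $\chi(M,t)$ yields a double sum over pairs $(X,B)$ with $\bigvee B=X$. Each subset $B\subseteq A$ whose join exists occurs exactly once, with $X=\bigvee B$. Subsets with no common upper bound never occur. Interchanging the order of summation therefore gives
\[
\chi(M,t)=\sum_{B\subseteq A,\ \bigvee B\text{ exists}}(-1)^{|B|}\,t^{r-{\rm rk}_M(\bigvee B)}.
\]
The empty set contributes the term $t^r$, with $\bigvee\emptyset=\hat0$, consistent with $\mu(\hat0,\hat0)=1$.

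I expect no real obstacle. The only step needing an argument is the compatibility of joins in $M_X$ with joins in $M$, and that comes from the meet-semilattice property together with finiteness.
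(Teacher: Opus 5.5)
Your proof is correct and follows the paper's argument: apply the Crosscut Theorem in each principal ideal $M_X$, substitute into the definition of $\chi(M,t)$, and regroup the double sum by $X=\bigvee B$. Your check that $\bigvee_{M_X}B=\bigvee_M B$, via finiteness and the meet-semilattice property, fills in a detail the paper leaves implicit; it also shows that the argument only uses that each $M_X$ is a lattice, not that it is geometric.
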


A well known result shows that the characteristic polynomials of an affine arrangement $\mathcal{A}$ and its cone $c\mathcal{A}$ satisfy the relation:
\[
\chi(c\mathcal{A},t)=(t-1)\chi(\mathcal{A},t),
\]
see \cite{stanley2007hyperplane}. In 2007, Ardila generalized this identity to characteristic polynomials of a semimatroid and its associated pointed matroid in \cite[Proposition 8.7]{ardila2007}. In what follows, we establish its counterpart for geometric semilattices, which will be applied in subsequent arguments.

\begin{prop}\label{cM-M}
Let $M$ be a geometric semilattice of rank $r$, and let \(cM\) be as in Definition~\ref{Def-Cone}. Then
\[
\chi(cM,t)=(t-1)\chi(M,t).
\]
\end{prop}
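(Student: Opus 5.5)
We will compute $\chi(cM,t)$ using the atom-set expansion of Proposition \ref{semilattice}, applied to the geometric lattice $cM$. Since $cM$ is a lattice, every subset of atoms has a join. Its atom set is $A\sqcup\{a_0\}$, and its rank is $r+1$: the top element is $\underline{X}$ for a maximal $X\in M$, and by \cite[Lemma 5.6]{southerland2025region} $\underline X$ covers $X$. Combining the Crosscut Theorem (Proposition \ref{thm:crosscut}) with the definition of $\chi$ gives
\[
\chi(cM,t)=\sum_{B\subseteq A\sqcup\{a_0\}}(-1)^{|B|}t^{\,r+1-{\rm rk}_{cM}(\bigvee_{cM} B)}.
\]
We split this sum according to whether $a_0\in B$.

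\textbf{Subsets $B\subseteq A$.} Here we separate two cases according to whether $\bigvee_M B$ exists.
\begin{itemize}
\item If $\bigvee_M B$ exists, then $\bigvee_{cM}B=\bigvee_M B$, since by Theorem \ref{isomo_2} the ideal $(cM)_X=M_X$ for $X\in M$. These terms contribute
\[
t\sum_{B\subseteq A,\ \bigvee B\text{ exists}}(-1)^{|B|}t^{\,r-{\rm rk}_M(\bigvee B)}=t\,\chi(M,t),
\]
by Proposition \ref{semilattice}.
\item If $\bigvee_M B$ does not exist, then $\bigvee_{cM}B$ lies in $\underline M$.
\end{itemize}

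\textbf{Subsets $B'=B\sqcup\{a_0\}$.} Here $\bigvee_{cM}B'\in\underline M$ always.

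\textbf{Pairing.} The key step is to show that all terms whose join lands in $\underline M$ sum to $-\chi(M,t)$. To do this, fix $Y\in\underline M$ and consider the sets $S\subseteq A\sqcup\{a_0\}$ with $\bigvee_{cM}S=Y$. Toggling $a_0$ on those $B\subseteq A$ whose join $\bigvee_{cM} B=Y$ lies in $\underline M$ is a sign-reversing involution: adding $a_0$ does not change a join already in $\underline M$, since $a_0\le Y$. Likewise, removing $a_0$ from $B'$ keeps the join equal to $Y$ unless $\bigvee_M B$ exists.

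Hence these terms cancel except for the sets $B'=B\sqcup\{a_0\}$ with $\bigvee_M B=X$ existing. For such sets, Proposition \ref{prop:bar-is-join} gives
\[
\bigvee_{cM}B'=X\vee_{cM}a_0=\underline X,
\]
which has rank ${\rm rk}(X)+1$ by the covering statement in \cite[Lemma 5.6]{southerland2025region}. Their total contribution is therefore
\[
\sum_{B\subseteq A,\ \bigvee B\text{ exists}}(-1)^{|B|+1}t^{\,r-{\rm rk}_M(\bigvee B)}=-\chi(M,t).
\]
Adding the two contributions gives $\chi(cM,t)=t\,\chi(M,t)-\chi(M,t)=(t-1)\chi(M,t)$.

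\textbf{Main obstacle.} The delicate point is the rank bookkeeping: we must verify that ${\rm rk}_{cM}$ restricted to $M$ agrees with ${\rm rk}_M$, and that $\underline X$ has rank exactly ${\rm rk}_M(X)+1$. Both follow from Theorem \ref{isomo_2} and the covering fact. Once these are in place, the involution argument is routine.
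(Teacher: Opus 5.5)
Your proof is correct and is essentially the paper's argument: the paper also applies Proposition \ref{semilattice} to $cM$ and pairs each $B\subseteq A$ with $B\cup\{a_0\}$. When $X=\bigvee_M B$ exists, the pair contributes $(t-1)t^{r-{\rm rk}_M(X)}$ (by Proposition \ref{prop:bar-is-join} and the fact that $\underline X$ covers $X$); otherwise $a_0\le\bigvee_{cM}B$ and the pair contributes $0$. Your sign-reversing involution, grouped by the value $Y\in\underline M$ of the join, is this same pairing, and your use of Theorem \ref{isomo_2} to justify $\bigvee_{cM}B=\bigvee_M B$ and ${\rm rk}_{cM}={\rm rk}_M$ on $M$ fills in steps the paper states without comment.
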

   
\begin{proof}
By the construction of \(cM\), it is a geometric lattice of rank \(r+1\). From Proposition \ref{semilattice}, the characteristic polynomial $\chi(cM,t)$ can be written as
\[
\chi(cM,t)=\sum_{B\subseteq A\cup\{a_0\}}(-1)^{|B|}t^{r+1-\operatorname{rk}_{cM}(\bigvee_{cM}B)}.
\]
We split the sum depending on whether the subsets contain $a_0$, and then obtain
\[
\chi(cM,t)=\sum_{B\subseteq A}(-1)^{|B|}\big(t^{r+1-\operatorname{rk}_{cM}(\bigvee_{cM}B)}-
t^{r+1-\operatorname{rk}_{cM}(\bigvee_{cM}(B\cup\{a_0\}))}\big).
\]
Given a subset $B\subseteq A$, if $\bigvee_MB$ exists, set $X=\bigvee_MB$. Then, we have $X=\bigvee_{cM}B$ and $\bigvee_{cM}(B\cup\{a_0\})=X\vee_{cM}a_0=\underline X$ from Proposition \ref{prop:bar-is-join}. Since \(a_0\nleq X\), it follows that
\[
\operatorname{rk}_{cM}(\underline X)=\operatorname{rk}_{cM}(X\vee_{cM} a_0)=\operatorname{rk}_{cM}(X)+1=\operatorname{rk}_M (X)+1.
\]
Thus, the expression  $t^{r+1-\operatorname{rk}_{cM}(\bigvee_{cM}B)}-t^{r+1-\operatorname{rk}_{cM}(\bigvee_{cM}(B\cup\{a_0\}))}$ can be simplified to $(t-1)t^{r-\operatorname{rk}_M (X)}$. When $\bigvee_MB$ does not exist, then \(\bigvee_{cM}B\notin M\), and hence \(a_0\le \bigvee_{cM}B\). This implies  $\bigvee_{cM}(B\cup\{a_0\})=\bigvee_{cM}B$. Consequently, $t^{r+1-\operatorname{rk}_{cM}(\bigvee_{cM}B)}-t^{r+1-\operatorname{rk}_{cM}(\bigvee_{cM}(B\cup\{a_0\}))}$ equals $0$ in this case. Summarizing the above arguments, we conclude that 
\[
\chi(cM,t)=(t-1)\sum_{B\subseteq A,\,\bigvee_M B\text{ exists}}(-1)^{|B|}t^{r-\operatorname{rk}_M(\bigvee_M B)}=(t-1)\chi(M,t),
\]
where the last equality follows from Proposition \ref{semilattice}. This completes the proof.
\end{proof}

With the above preparations, we now have enough tools to prove our main result as follows:
\[\chi(M,st)=\sum_{X\in \underline{M}} s^{\,r-{\rm rk}_{\underline{M}}(X)}\chi(\underline{M}^X,t)\,\chi(M_{(X)},s).\]
\begin{proof}[Proof of Theorem \ref{main thm}]
From Propositions \ref{convolution over semimatroid} and \ref{cM-M}, we immediately obtain
\[
(st-1)\chi(M,st)=\sum_{X\in cM}s^{r+1-\operatorname{rk}_{cM}(X)}\chi\big((cM)^X,t\big)\chi\big((cM)_X,s\big).
\]
We can split the sum into two parts according to $cM=M\sqcup\underline M$:
\begin{align}
(st - 1)\chi(M,st)&= \sum_{X \in \underline M}s^{r+1-{\rm rk}_{cM}(X)}\chi\big((cM)^X, t\big)\chi\big((cM)_X, s\big)\label{A=B1}\\
& + \sum_{X \in M}s^{r+1-{\rm rk}_{cM}(X)}\chi\big((cM)^X, t\big)\chi\big((cM)_X, s\big)\label{A=B2}. 
\end{align}

First consider the sum in \eqref{A=B1}. Since $X\in\underline M$, it follows from  Theorem \ref{isomo_1} that
\[(cM)^X= \underline{M}^X\quad{\rm and}\quad(cM)_X= c(M_{(X)}).\]
Thus, we have $\chi\big((cM)^X, t\big)=\chi(\underline{M}^X, t)$ and $\chi\big((cM)_X, s\big)=\chi\big(c(M_{(X)}),s\big)$. Together with Proposition \ref{cM-M}, we arrive at $\chi\big((cM)_X, s\big)=(s-1)\chi(M_{(X)},s)$. Substituting these into the sum in \eqref{A=B1} and using \({\rm rk}_{cM}(X)={\rm rk}_{\underline{M}}(X)+1\) with \(X\in\underline{M}\), we deduce
\[
\sum_{X\in\underline M}s^{r+1-{\rm rk}_{cM}(X)}\chi\big((cM)^X, t\big)\chi\big((cM)_X, s\big)
= (s-1)\sum_{X \in \underline{M}} s^{r-{\rm rk}_{\underline{M}}(X)}\chi(\underline{M}^X, t)\chi(M_{(X)}, s).
\]
   
Next consider the sum in \eqref{A=B2}. As $X\in M$,  from Theorem \ref{isomo_2}, we obtain
\[(cM)_X= M_X \quad{\rm and}\quad(cM)^X= c(M^X).\] 
Similarly, we have
\[
\chi\big((cM)_X, s\big)=\chi(M_X, s)\quad{\rm and}\quad\chi\big((cM)^X, t\big)=(t-1)\chi(M^X, t).
\]
Since \({\rm rk}_{cM}(X)={\rm rk}_M(X)\) for \(X\in M\), substituting these into \eqref{A=B2} yields
\[
\sum_{X\in M}s^{r+1-{\rm rk}_{cM}(X)}\chi\big((cM)^X, t\big)\chi\big((cM)_X, s\big)
= s(t-1)\sum_{X \in M} s^{r-{\rm rk}_M(X)}\chi(M^X, t)\chi(M_X, s).
\]
   
Now, replacing the summations in \eqref{A=B1} and \eqref{A=B2} with their equivalent forms, and combining Proposition \ref{convolution over semimatroid}, we derive 
\[
(st-1)\chi(M,st)=(s-1)\sum_{X \in \underline{M}}s^{r-{\rm rk}_{\underline{M}}(X)}\chi(\underline{M}^X, t)\chi(M_{(X)}, s)+ s(t-1)\chi(M,st).
\]
This further yields
\[
(s-1)\chi(M,st)=(s-1)\sum_{X \in \underline{M}}s^{r-{\rm rk}_{\underline{M}}(X)}\chi(\underline{M}^X, t)\chi(M_{(X)}, s).
\]
As both sides are bivariate polynomials in $t$ and $s$, we conclude that 
\[
\chi(M,st)=\sum_{X \in \underline{M}}s^{r-{\rm rk}_{\underline{M}}(X)}\chi(\underline{M}^X, t)\chi(M_{(X)},s).
\]
This completes the proof.
\end{proof}
\section{Two combinatorial interpretations}\label{Sec4}
In this section, we focus on finding combinatorial interpretations for the convolution formula in Corollary \ref{cor:arrangement} and Wang's convolution formula \eqref{HA-Convolution}:
\[
\chi(\mathcal{A},st) = \sum_{V \in L(\underline{\mathcal{A}})}\chi(\underline{\mathcal{A}}^V,t)\,\chi(\mathcal{A}_{(V)},s),
\]
and
\[
\chi(\mathcal{A},st) = \sum_{V \in L(\mathcal{A})}\chi(\mathcal{A}^V,t)\,\chi(\mathcal{A}_V,s).
\]
\subsection{A combinatorial interpretation of the first formula}\label{Sec4-1}
This subsection aims to provide a combinatorial interpretation of the first convolution formula using the finite field method. To obtain our desired results, we need to introduce a method based on finite fields for computing the characteristic polynomial of an integral arrangement. An {\em integral hyperplane} $H$ is a hyperplane in $\mathbb{R}^n$ defined by the linear equation 
\[
H: a_1x_1+a_2x_2+\cdots+a_nx_n=b,
\]
where all $a_i$ and $b$ are integers. We abbreviate this defining equation of $H$ as $H:\bm a\cdot\bm x=b$. An {\em integral arrangement} $\mathcal{A}$ is a hyperplane arrangement in $\mathbb{R}^n$ consisting of integral hyperplanes. For every integral hyperplane $H\in\mathcal{A}$, reducing its coefficients modulo a prime $p$ yields an affine subspace $H_q$ in $\mathbb{F}_q^n$, which consists of  all $n$-tuples $(x_1,\ldots,x_n)$ satisfying the defining equation of $H$ over $\mathbb{F}_q$ with $q=p^r (r\ge 1)$. Naturally, for a large enough prime $p$, we obtain a hyperplane arrangement $\mathcal{A}_q$ in $\mathbb{F}_q^n$ consisting of the hyperplanes $H_q$ associated with $\mathcal{A}$ such that the intersection posets of both hyperplane arrangements $\mathcal{A}$ and $\mathcal{A}_q$ are isomorphic, see \cite[Proposition 5.13]{stanley2007hyperplane}. More precisely, the bijection \(H\mapsto H_q\) from \(\mathcal{A}\) to \(\mathcal{A}_q\) yields an order-preserving isomorphism of \(L(\mathcal{A})\) and \(L(\mathcal{A}_q)\). Every flat \(V\in L(\mathcal{A})\) maps bijectively to \(V_q=\bigcap_{H\in\mathcal{A}_V}H_q\in L(\mathcal{A}_q)\) satisfying \((\mathcal{A}_q)_{V_q}=\{H_q:H\in\mathcal{A}_V\}\). Originated from Crapo and Rota's work \cite{Crapo-Rota1970}, Athanasiadis in \cite{athanasiadis1996} systematically developed a finite field method to compute the characteristic polynomial of an integral arrangement. This method reduces the computation of the characteristic polynomial to a simple counting problem in a vector space over a finite field, and was also independently discovered by  Bj\"{o}rner and Ekedahl  \cite{Bjorner-Ekedahl1997}. More precisely, they showed that $\chi(\mathcal{A},q)$ exactly counts the number of all points in $\mathbb{F}_q^n$ that do not lie in any of the hyperplanes in $\mathcal{A}_q$. 
\begin{prop}[ \cite{athanasiadis1996,Bjorner-Ekedahl1997}]\label{FFM}
Let $\mathcal{A}$ be an integral arrangement in $\mathbb{R}^n$. For any large enough prime $p$, we have $L(\mathcal{A})\cong L(\mathcal{A}_q)$ with $q=p^r(r\ge 1)$, and
\[
\chi(\mathcal{A},q)=\#M(\mathcal{A}_q),
\]
where $M(\mathcal{A}_q):=\mathbb{F}_q^n-\bigcup_{H_q\in\mathcal{A}_q}H_q$ is called the complement of $\mathcal{A}_q$.
\end{prop}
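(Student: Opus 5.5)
This is the Athanasiadis/Björner–Ekedahl finite field method. It is cited from the literature, so in the paper we only need to recall why it holds.

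The plan has three steps.

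\textbf{Step 1: a large prime preserves the intersection poset.} For each subset $S\subseteq\mathcal{A}$, the intersection $\bigcap_{H\in S}H$ is the solution set of an integral linear system $A_S\bm x=\bm b_S$. Whether it is nonempty, and its dimension, are determined by $\operatorname{rank}A_S$ and $\operatorname{rank}[A_S\mid\bm b_S]$. Each of these ranks is witnessed by a nonvanishing minor. Take $p$ larger than the absolute values of all nonzero minors of all these finitely many integer matrices; then every rank is unchanged modulo $p$. Ranks are also unchanged over $\mathbb{F}_q\supseteq\mathbb{F}_p$, since rank does not change under field extension.

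With this choice of $p$, the map $S\mapsto\bigcap_{H\in S}H_q$ has the same pattern of nonemptiness and dimensions as over $\mathbb{R}$. It also has the same coincidences, because $\bigcap S=\bigcap S'$ holds exactly when $\bigcap S\subseteq H$ for every $H\in S'$ and vice versa. That condition is itself a rank condition: $\operatorname{rank}[A_{S\cup\{H\}}\mid\bm b]=\operatorname{rank}[A_S\mid\bm b]=\operatorname{rank}A_{S\cup\{H\}}$. Hence $L(\mathcal{A})\cong L(\mathcal{A}_q)$, with matching ranks.

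\textbf{Step 2: inclusion–exclusion over the complement.} We have
\[
\#M(\mathcal{A}_q)=\sum_{S\subseteq\mathcal{A}_q}(-1)^{|S|}\#\Big(\bigcap S\Big),
\]
where the empty intersection is $\mathbb{F}_q^n$. A nonempty intersection $\bigcap S=V_q$ is an affine subspace, so it has $q^{\dim V_q}$ points. Grouping the terms by the flat $V_q$ gives
\[
\#M(\mathcal{A}_q)=\sum_{V\in L(\mathcal{A}_q)}q^{\dim V}\sum_{S:\,\bigcap S=V}(-1)^{|S|}.
\]

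\textbf{Step 3: identify the inner sum with $\mu(\hat 0,V)$.} The interval $[\hat0,V]$ is a geometric lattice whose atoms are the hyperplanes containing $V$, and joins in it are intersections. By the Crosscut Theorem (Proposition~\ref{thm:crosscut}), the inner sum equals $\mu(\hat0,V)$, exactly as used in Proposition~\ref{semilattice}. Since $\dim V=n-\operatorname{rk}(V)$, the right-hand side becomes $\sum_V\mu(\hat0,V)q^{n-\operatorname{rk}V}$.

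This matches $\chi(\mathcal{A},q)$ with the convention that the exponent is $n-\operatorname{rk}$. If the paper's convention uses $r-\operatorname{rk}$ instead, the count picks up a factor $q^{n-r}$, so the statement should be read with the essentialized convention. By Step 1 the Möbius data transfer from $L(\mathcal{A}_q)$ to $L(\mathcal{A})$.

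\textbf{Main obstacle.} The only delicate point is Step 1: choosing $p$ so that all ranks, nonemptiness and coincidences of intersections are preserved for every $q=p^r$ simultaneously. The bound on the minors handles this uniformly.
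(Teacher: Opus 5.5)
Your proof is correct. The paper does not prove this proposition; it quotes it from Athanasiadis and Bj\"orner--Ekedahl. So the comparison is with the standard argument.

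Your Step~1 is the same argument the paper itself runs in the proof of Lemma~\ref{Cong1}: nonzero minors survive reduction mod $p$. Your route differs in Steps~2--3. You count $\#M(\mathcal{A}_q)$ by inclusion--exclusion over subsets of $\mathcal{A}_q$. You then collapse each fibre $\{S:\bigcap S=V\}$ with the Crosscut Theorem. In effect you prove the arrangement analogue of Proposition~\ref{semilattice}, namely $\#M(\mathcal{A}_q)=\sum_{S:\,\bigcap S\neq\emptyset}(-1)^{|S|}q^{\dim\bigcap S}$.

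The usual proof (Athanasiadis, Stanley) avoids the crosscut theorem altogether:
\begin{itemize}
\item For $V\in L(\mathcal{A}_q)$ put $f(V)=\#\bigl(V-\bigcup_{W>V}W\bigr)$.
\item Observe that $q^{\dim V}=\#V=\sum_{W\ge V}f(W)$, because each point of $V$ lies in a unique smallest flat.
\item M\"obius-invert at $V=\hat 0$.
\end{itemize}
That route needs only the poset $L(\mathcal{A}_q)$. Yours needs the intervals $[\hat 0,V]$ to be lattices, but it fits directly with the Crosscut and Proposition~\ref{semilattice} machinery the paper already uses in Section~\ref{Sec3}.

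You can drop your hedge about conventions; the statement needs no reinterpretation. For arrangements the paper tacitly uses $\chi(\mathcal{A},t)=\sum_{V\in L(\mathcal{A})}\mu(\hat 0,V)\,t^{\dim V}$. Corollary~\ref{cor:arrangement} forces this, since it lacks the factor $s^{r-\operatorname{rk}_{\underline{M}}(X)}$ of Theorem~\ref{main thm}. Passing from the $r-\operatorname{rk}$ convention to the $\dim$ convention does three things:
\begin{itemize}
\item it multiplies $\chi(\mathcal{A},st)$ by $(st)^{n-r}$;
\item it multiplies $\chi(\underline{\mathcal{A}}^V,t)$ by $t^{n-r}$;
\item it multiplies $\chi(\mathcal{A}_{(V)},s)$ by $s^{n-\operatorname{rk}V}$.
\end{itemize}
Since $t^{n-r}s^{n-\operatorname{rk}V}=(st)^{n-r}s^{r-\operatorname{rk}V}$, the missing factor is absorbed exactly. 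With this convention your Step~3 gives the stated identity verbatim, including for non-essential $\mathcal{A}$. The paper needs that case, for example for $\mathcal{A}_{(V)}$ in Section~\ref{Sec4-1}. The $r-\operatorname{rk}$ version would agree only when $\mathcal{A}$ is essential, so ``essentialized'' is the wrong reading.
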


Based on Proposition \ref{FFM}, we have the following lemma.
\begin{lemma}\label{Cong1}
Let $\mathcal{A}$ be an integral arrangement in $\mathbb{R}^n$. For any large enough prime $p$, $q=p^r(r\ge 1)$, and  $V \in L(\underline{\mathcal{A}})$, we have
\[
L(\underline{\mathcal A})\cong L(\underline{\mathcal A}_q),\quad L(\mathcal A_{(V)})\cong L\big((\mathcal A_{(V)})_q\big)\quad{\rm and}\quad L(\underline{\mathcal A}^V)\cong L\big((\underline{\mathcal A}^V)_q\big).
\]
Moreover, we have
\[
(\mathcal A_{(V)})_q=(\mathcal A_q)_{(V_q)}\quad{\rm and }\quad (\underline{\mathcal A}^V)_q=\underline{\mathcal A}_q^{V_q}.
\]
\end{lemma}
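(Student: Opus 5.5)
The plan is to reduce all three isomorphisms to Proposition \ref{FFM} (more precisely, to \cite[Proposition 5.13]{stanley2007hyperplane}) applied to finitely many auxiliary integral arrangements. We then check the two set-theoretic identities directly from Definition \ref{HA-Cone}, using only linear-algebraic facts that survive reduction modulo a large prime.

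\emph{Step 1: the auxiliary arrangements are integral.} $\underline{\mathcal A}$ is integral, since $\underline H:\bm a\cdot\bm x=0$ has integer coefficients. Each $\mathcal A_{(V)}$ is a subarrangement of $\mathcal A$, hence integral. For $V\in L(\underline{\mathcal A})$, $V$ is a linear subspace cut out by integral equations, so after choosing an integral basis of $V$ (possible since $V$ is defined over $\mathbb Q$) we may identify $V\cong\mathbb R^{\dim V}$. Under this identification, $\underline{\mathcal A}^V$ becomes an integral central arrangement. Since $L(\underline{\mathcal A})$ is finite, there are only finitely many such arrangements. We take $p$ larger than the thresholds for $\mathcal A$, for $\underline{\mathcal A}$, and for all of these; this also requires the integral basis of each $V$ to reduce to a basis of $V_q$. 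Then all three isomorphisms $L(\cdot)\cong L((\cdot)_q)$ follow at once.

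\emph{Step 2: the identities.} The key input is that for $p$ large, $L(\mathcal A)\cong L(\mathcal A_q)$ and $L(\underline{\mathcal A})\cong L(\underline{\mathcal A}_q)$ via $H\mapsto H_q$. In addition, ranks and dimensions, as well as emptiness versus nonemptiness of intersections, are preserved. We can ensure this by taking $p$ not dividing the nonzero minors of the augmented coefficient matrices of all subsets of $\mathcal A$.

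For the first identity, observe that $H\in\mathcal A_{(V)}$ iff $V\subseteq H$ or $V\cap H=\emptyset$. Writing $V=\bigcap_{K\in S}\underline K$ and $H:\bm a\cdot\bm x=b$, the condition becomes a dichotomy. Either $\bm a$ lies in the row span of $S$, in which case $V\subseteq \underline H$, and then $V\subseteq H$ iff $b=0$, while $V\cap H=\emptyset$ iff $b\neq0$. Or $\bm a$ is not in that span, in which case $V\cap H\neq\emptyset$ and $V\not\subseteq H$. Both "$\bm a$ in the span" and "$b=0$" are rank and minor conditions preserved mod $p$ for large $p$. Hence $H\in\mathcal A_{(V)}$ iff $H_q\in(\mathcal A_q)_{(V_q)}$, which gives $(\mathcal A_{(V)})_q=(\mathcal A_q)_{(V_q)}$.

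For the second identity, $\underline{\mathcal A}^V=\{\underline H\cap V\}-\{V\}$; no empty sets occur since everything is central. Under preservation of ranks, $\underline H\cap V=V$ iff $\underline H_q\cap V_q=V_q$. Moreover, the reduction of the hyperplane $\underline H\cap V$ of $V$ is $\underline H_q\cap V_q$, where we identify $V$ with the coordinate space via the integral basis that reduces to a basis of $V_q$. Distinct hyperplanes of $V$ stay distinct, again by a rank condition. Therefore $(\underline{\mathcal A}^V)_q=\underline{\mathcal A}_q^{V_q}$.

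\emph{Main obstacle.} The main difficulty is bookkeeping rather than ideas. We must make precise what "reduction mod $p$" means for $\underline{\mathcal A}^V$, since this arrangement lives in $V$ rather than in $\mathbb R^n$. We must also guarantee a single threshold for $p$ that makes every relevant rank and minor condition hold simultaneously. I would handle this by fixing, once and for all, the finite list of integer matrices involved and excluding the finitely many primes dividing their nonzero minors, together with the denominators arising from the chosen bases of each $V$.
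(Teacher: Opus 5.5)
Your proof is correct, but it is organized differently from the paper's. The paper proves only $L(\underline{\mathcal A})\cong L(\underline{\mathcal A}_q)$ by hand: the ranks of the normals $\{\bm a_i:i\in I\}$ survive reduction mod $p$, and so does the rank condition characterizing when two intersections coincide. It then gets the other two isomorphisms by restriction. $L(\mathcal A_{(V)})$ is the subposet of $L(\mathcal A)$ formed by intersections of members of $\mathcal A_{(V)}$, and $L(\underline{\mathcal A}^V)$ is the subposet of $L(\underline{\mathcal A})$ of flats contained in $V$. So the ambient isomorphisms, which send $\bigcap_{i\in I}H_i$ to $\bigcap_{i\in I}H_{i,q}$, simply restrict, and the two identities are then asserted to be easy consequences.

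You instead apply the reduction theorem as a black box to finitely many auxiliary arrangements. This costs an integral coordinatization of each $V$ and a few more excluded primes, but it buys two things. First, it gives $(\underline{\mathcal A}^V)_q$ a precise meaning and makes $\underline{\mathcal A}^V$ a genuine integral arrangement. Proposition~\ref{FFM} then applies to it literally in the subsequent theorem; the paper uses this tacitly, even though $\underline{\mathcal A}^V$ lives in $V$ rather than in $\mathbb R^n$. Second, your case analysis supplies the actual content behind the paper's ``easily see''.

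One simplification is available. In your case split, whenever $\bm a$ lies in the span of the normals of $V$ you get $H\in\mathcal A_{(V)}$ regardless of $b$. So what you have really shown is $H\in\mathcal A_{(V)}\iff V\subseteq\underline H$, valid over any field. Since this is an order relation in $L(\underline{\mathcal A})$, the identity $(\mathcal A_{(V)})_q=(\mathcal A_q)_{(V_q)}$ follows directly from $L(\underline{\mathcal A})\cong L(\underline{\mathcal A}_q)$. Your separate check that ``$b=0$'' is preserved is not needed for that identity.
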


\begin{proof}
Suppose
\[
\mathcal A=\{H_1,\ldots,H_m\}\quad {\rm and}\quad H_i:\bm a_i\cdot\bm x=b_i,
\]
where each \(\bm a_i\in\mathbb Z^n-\{\bm 0\}\) and \(b_i\in\mathbb Z\). Then
\[
\underline{\mathcal{A}}=\{\underline{H_i}:H_i\in\mathcal{A}\}\quad {\rm and}\quad\underline{H_i}:\bm a_i\cdot\bm x=0.
\]
We first prove that $L(\underline{\mathcal A})\cong L(\underline{\mathcal A}_q)$. For any subset \(I\subseteq \{1,\ldots,m\}\), the dimension of $\bigcap_{i\in I}\underline{H_i}$ is determined by the rank of $\{\bm a_i:i\in I\}$. For all sufficiently large primes \(p\), all nonzero minors remain nonzero modulo \(p\), and hence these ranks are preserved under modulo \(p\). Thus, for every \(I\subseteq\{1,\ldots,m\}\), we have 
\[
\dim \bigcap_{i\in I}\underline{H_i}=\dim \bigcap_{i\in I}\underline{H_{i,q}}.
\]
Moreover, for any \(I,J\subseteq\{1,\ldots,m\}\), the equality $\bigcap_{i\in I}\underline{H_i}=\bigcap_{j\in J}\underline{H_j}$ is equivalent to
\[
\operatorname{rank}(\bm a_i:i\in I)=\operatorname{rank}(\bm a_j:j\in J)=\operatorname{rank}(\bm a_k:k\in I\cup J).
\]
This condition is also preserved modulo \(p\). Thus, the map from \(L(\underline{\mathcal A})\) to \(L(\underline{\mathcal A}_q)\) defined by mapping $\bigcap_{i\in I}\underline{H_i}$ to $\bigcap_{i\in I}\underline{H_{i,q}}$, is an order-preserving bijection. Thus, $L(\underline{\mathcal A})\cong L(\underline{\mathcal A}_q)$.

Since $\mathcal A_{(V)}$ and $(\mathcal A_{(V)})_q$ are the subarrangements of $\mathcal{A}$ and $\mathcal{A}_q$ respectively, $L(\mathcal A_{(V)})$ and $L\big((\mathcal A_{(V)})_q\big)$ are the subposets of $L(\mathcal{A})$ and $L(\mathcal{A}_q)$, respectively. Thus, the isomorphism $L(\mathcal{A})\cong L(\mathcal{A}_q)$ from Proposition \ref{FFM} directly implies $L(\mathcal A_{(V)})\cong L\big((\mathcal A_{(V)})_q\big)$. By analogous arguments, the isomorphism $L(\underline{\mathcal A})\cong L(\underline{\mathcal A}_q)$ also yields the isomorphism $L(\underline{\mathcal A}^V)\cong L\big((\underline{\mathcal A}^V)_q\big)$. Moreover, from the isomorphisms $L(\mathcal{A})\cong L(\mathcal{A}_q)$ and $L(\underline{\mathcal A})\cong L(\underline{\mathcal A}_q)$, we can easily see that $(\mathcal A_{(V)})_q=(\mathcal A_q)_{(V_q)}$ and $(\underline{\mathcal A}^V)_q=\underline{\mathcal A}_q^{V_q}$. 
\end{proof}

Based on the foregoing preparations, we proceed to interpret the first formula combinatorially via the finite field method.
\begin{theorem}
Let $\mathcal{A}$ be an integral arrangement in $\mathbb{R}^n$, and let $p$ be a large enough prime. Then
\[
\chi(\mathcal{A},p^2) = \sum_{V \in L(\underline{\mathcal{A}})}\chi(\underline{\mathcal{A}}^V,p)\chi(\mathcal{A}_{(V)},p).
\]
\end{theorem}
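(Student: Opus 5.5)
Though the identity is just the case $s=t=p$ of Corollary~\ref{cor:arrangement}, the aim of this section is a combinatorial interpretation, so I plan a direct counting proof via the finite field method. By Proposition~\ref{FFM} with $q=p^2$, the left side $\chi(\mathcal A,p^2)$ counts points $\bm z\in\mathbb F_{p^2}^n$ lying on no hyperplane of $\mathcal A_{p^2}$. By Proposition~\ref{FFM} and Lemma~\ref{Cong1} with $q=p$, each summand $\chi(\underline{\mathcal A}^V,p)\chi(\mathcal A_{(V)},p)$ counts pairs $(\bm x,\bm y)$ with $\bm y\in M((\mathcal A_p)_{(V_p)})\subseteq\mathbb F_p^n$ and $\bm x\in M(\underline{\mathcal A}_p^{V_p})\subseteq V_p$. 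The goal is a bijection between $M(\mathcal A_{p^2})$ and the disjoint union of these pair sets over $V\in L(\underline{\mathcal A})$.

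Concretely, fix $\omega\in\mathbb F_{p^2}\setminus\mathbb F_p$, so that $\mathbb F_{p^2}=\mathbb F_p\oplus\omega\mathbb F_p$, and write $\bm z=\bm y+\omega\bm x$ with $\bm x,\bm y\in\mathbb F_p^n$. Since the $\bm a_i,b_i$ are integral, $\bm a_i\cdot\bm z=b_i$ holds iff $\bm a_i\cdot\bm x=0$ and $\bm a_i\cdot\bm y=b_i$. Thus $\bm z\in M(\mathcal A_{p^2})$ iff for every $i$, either $\bm x\notin\underline{H_{i,p}}$ or $\bm y\notin H_{i,p}$. Given such a pair, let $V_p$ be the intersection of all $\underline{H_{i,p}}$ containing $\bm x$; this is the smallest flat of $L(\underline{\mathcal A}_p)\cong L(\underline{\mathcal A})$ containing $\bm x$, hence corresponds to a unique $V\in L(\underline{\mathcal A})$. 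By construction $\bm x$ lies on no hyperplane of the restriction $\underline{\mathcal A}_p^{V_p}$ (any $\underline{H_{i,p}}\cap V_p$ containing $\bm x$ would have $V_p\subseteq\underline{H_{i,p}}$, so it would equal $V_p$ and be excluded), i.e. $\bm x\in M(\underline{\mathcal A}_p^{V_p})$.

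The key step is to show $\bm y\in M((\mathcal A_p)_{(V_p)})$, and conversely that any pair with this property gives a point of $M(\mathcal A_{p^2})$. A hyperplane $H=H_{i,p}$ satisfies $V_p\subseteq H$ or $V_p\cap H=\emptyset$ (with $V_p$ linear) exactly when $\bm a_i$ vanishes on $V_p$, i.e. $V_p\subseteq\underline{H_{i,p}}$, i.e. $\bm x\in\underline{H_{i,p}}$. So $(\mathcal A_p)_{(V_p)}$ consists precisely of the $H_{i,p}$ with $\bm x\in\underline{H_{i,p}}$, and the condition ``$\bm x\notin\underline{H_{i,p}}$ or $\bm y\notin H_{i,p}$ for all $i$'' becomes ``$\bm y$ avoids every hyperplane of $(\mathcal A_p)_{(V_p)}$''. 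Conversely, for $V$ given and $\bm x\in M(\underline{\mathcal A}_p^{V_p})$, the minimal flat containing $\bm x$ is $V_p$ itself, so $V$ is recovered and the map is a bijection. Summing over $V$ gives the identity.

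The main obstacle is the bookkeeping for ``large enough $p$'': the identification of $(\mathcal A_{(V)})_p$ with $(\mathcal A_p)_{(V_p)}$ and of $(\underline{\mathcal A}^V)_p$ with $\underline{\mathcal A}_p^{V_p}$, including that the parallel/containment conditions defining the localization are preserved mod $p$. This is exactly Lemma~\ref{Cong1}, which I would invoke, noting that the relevant rank conditions are preserved for sufficiently large primes.
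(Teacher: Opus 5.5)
Your proposal is correct and is essentially the paper's proof. The paper likewise writes $\bm z\in\mathbb F_{p^2}^n$ in an $\mathbb F_p$-basis $\{1,\lambda\}$ and uses that $\bm z\in H_{p^2}$ iff the constant part lies in $H_p$ and the $\lambda$-coefficient lies in $\underline H_p$. It then recovers $V$ as the minimal flat of $L(\underline{\mathcal A}_p)$ containing the $\lambda$-coefficient; your $\bm x,\bm y$ are simply swapped relative to the paper's names, and your observation that $(\mathcal A_p)_{(V_p)}$ consists exactly of the $H_{i,p}$ with $\bm x\in\underline{H_{i,p}}$ folds the paper's separate well-definedness and surjectivity checks into a single equivalence.
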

\begin{proof}
Taking an element $\lambda\in \mathbb F_{p^2}-\mathbb F_{p}$, since \([\mathbb F_{p^2}:\mathbb F_p]=2\) and \(\lambda\notin\mathbb F_p\), the set \(\{1,\lambda\}\) forms an \(\mathbb F_p\)-basis of \(\mathbb F_{p^2}\). Hence, each \(\bm z\in\mathbb F_{p^2}^n\) can be written uniquely as \(\bm z=\bm x+\lambda\bm y\), where \(\bm x,\bm y\in\mathbb F_p^n\). This allows us to consider the following mapping 
\[
\phi:\bigsqcup_{V\in L(\underline{\mathcal A})}\Big(M\big((\mathcal A_{(V)})_p\big)\times M\big((\underline{\mathcal A}^V)_p\big)\Big)\to M(\mathcal A_{p^2}),\quad(\bm x,\bm y)\mapsto \bm x+\lambda \bm y.
\]
First note that for any \(\bm x,\bm y\in\mathbb F_p^n\) and $H:\bm a\cdot\bm x=b$ in $\mathcal{A}$, we have the following equivalence:
\[
\bm x+\lambda \bm y\in H_{p^2}\iff[\bm a]_p\cdot\bm x+\lambda[\bm a]_p\cdot\bm y=[b]_p\iff[\bm a]_p\cdot\bm x=[b]_p\ \text{ and  }\ [\bm a]_p\cdot\bm y=[0]_p,
\]
where $[b]_p$ and $[\bm a]_p$ denote the $p$-reductions of $b$ and $\bm a$ modulo the prime $p$, respectively.  This yields the following equivalence:
\begin{equation}\label{A=B3}
\bm x+\lambda \bm y\in H_{p^2}\iff \bm x\in H_p\ \text{and}\ \bm y\in \underline H_p.
\end{equation}

We first verify that \(\phi\) is well-defined by contradiction. Suppose there exists \((\bm x,\bm y)\in M\big((\mathcal A_{(V)})_p\big)\times M\big((\underline{\mathcal A}^V)_p\big)\) such that \(\phi(\bm x,\bm y)\notin M(\mathcal A_{p^2})\). Then
\(\phi(\bm x,\bm y)\in H_{p^2}\) for some \(H\in\mathcal A\). It follows from \eqref{A=B3} that \(\bm x\in H_p\) and \(\bm y\in\underline{H}_p\). Together with \(\bm y\in M\big((\underline{\mathcal A}^V)_p\big)\), the condition \(\bm y\in\underline{H}_p\) implies \(V\subseteq\underline{H}\). Consequently, 
\(H_p\in(\mathcal A_{(V)})_p\), which contradicts \(\bm x\in M\big((\mathcal A_{(V)})_p\big)\). Thus \(\phi(\bm x,\bm y)\in M(\mathcal A_{p^2})\), and hence \(\phi\) is well-defined.

Next, we prove the injectivity of \(\phi\) via contradiction. Suppose there exist distinct elements $(\bm x, \bm y)$ and  $(\bm x', \bm y')$ in $\bigsqcup_{V\in L(\underline{\mathcal A})}\Big(M\big((\mathcal A_{(V)})_p\big)\times M\big((\underline{\mathcal A}^V)_p\big)\Big)$ such that \(\phi(\bm x,\bm y)=\phi(\bm x',\bm y')\), i.e.,  \(\bm x+\lambda \bm y=\bm x'+\lambda \bm y'\). Since the decomposition with respect to the \(\{1,\lambda\}\)  \(\mathbb F_p\)-basis is unique, we derive \(\bm x=\bm x'\) and \(\bm y=\bm y'\), a contradiction. Thus, $\phi$ is an injection.

Finally, we show that $\phi$ is surjective. Let \(\bm z\in M(\mathcal A_{p^2})\), and write \(\bm z=\bm x+\lambda\bm y\) with \(\bm x,\bm y\in\mathbb F_p^n\). Consider the minimal flat $V_{\bm y}\in L(\underline{\mathcal A}_p)$ containing $\bm y$, given by
\[
V_{\bm y}:=\bigcap_{\underline H\in\underline{\mathcal A},\,\bm y\in \underline{H}_p}\underline{H}_p\in L(\underline{\mathcal A}_p).
\]
Let \(V\in L(\underline{\mathcal A})\) be the corresponding flat such that \(V_p=V_{\bm y}\) under the isomorphism $L(\underline{\mathcal{A}})\cong L(\underline{\mathcal{A}}_p)$. We claim that \((\bm x,\bm y)\in M\big((\mathcal A_{(V)})_p\big)\times M\big((\underline{\mathcal A}^V)_p\big)\). It follows that \(\bm y\in V_p\). The minimality of $V_p$ implies that $V_p\subseteq\underline H_p$ whenever $\bm y\in\underline H_p$ for $\underline H\in\underline{\mathcal{A}}$. Thus, we have \(\bm y\in M(\underline{\mathcal A}_p^{V_p})=M\big((\underline{\mathcal A}^V)_p\big)\) from Lemma \ref{Cong1}. On the other hand, suppose \(\bm x\notin M\big((\mathcal A_{(V)})_p\big)\). Then \(\bm x\in H_p\) for some \(H\in\mathcal A_{(V)}\). Since $V$  is either contained in $H$ or is parallel to $H$ from $H\in\mathcal{A}_{(V)}$, we have \(V\subseteq\underline H\). Consequently, \(V_p\subseteq\underline{H}_p\), and hence \(\bm y\in\underline{H}_p\). Applying \eqref{A=B3} once again, we obtain $\bm z=\bm x+\lambda\bm y\in H_{p^2}$, which contradicts the condition $\bm z\in M(\mathcal A_{p^2})$. Consequently, we must have $\bm x\in M\big((\mathcal A_{(V)})_p\big)$. Then, we arrive at $(\bm x,\bm y)\in M\big((\mathcal A_{(V)})_p\big)\times M\big((\underline{\mathcal A}^V)_p\big)$ and $\bm z=\phi(\bm x,\bm y)$. Hence, the map $\phi$ is surjective.

Up to now, we have shown that $\phi$ is a bijection. Thus, the cardinalities of  the sets $\bigsqcup_{V\in L(\underline{\mathcal A})}\Big(M\big((\mathcal A_{(V)})_p\big)\times M\big((\underline{\mathcal A}^V)_p\big)\Big)$ and $M(\mathcal A_{p^2})$ coincide, giving the following identity:
\[
|M(\mathcal A_{p^2})|=\sum_{V\in L(\underline{\mathcal A})}\big|M\big((\mathcal A_{(V)})_p\big)\big|\big|M\big((\underline{\mathcal A}^V)_p\big)\big|.
\]
Applying Proposition \ref{FFM} and Lemma \ref{Cong1}, we conclude that  
\[
\chi(\mathcal{A},p^2) = \sum_{V \in L(\underline{\mathcal{A}})}\chi(\underline{\mathcal{A}}^V,p)\chi(\mathcal{A}_{(V)},p).
\]
This completes the proof.
\end{proof}

\subsection{A combinatorial interpretation of the second formula}\label{Sec4-2}
We now pass from the finite-field interpretation over \(\mathbb F_{p^2}\) and \(\mathbb F_p\) to an analogous interpretation over the rings \(\mathbb Z_{pq}\), \(\mathbb Z_p\), and \(\mathbb Z_q\). In this setting, Wang's convolution formula \cite{wang2015} also admits a natural bijective explanation. Let $\mathcal{A}$ be an integral arrangement in $\mathbb{R}^n$. Similarly, for each integral hyperplane $H:\bm a\cdot\bm x=b$ in $\mathcal{A}$ and a positive integer $q$, they naturally give rise to a coset $H_q$ of $\mathbb{Z}_q^n$ defined by
\[
H_q:=\big\{\bm x\in \mathbb{Z}_q^n: \bm a\cdot\bm x\equiv b\pmod q\big\}.
\]
Collecting all such $H_q$ for $H\in\mathcal{A}$ forms the {\em group arrangement} $\mathcal{A}_q:=\{H_q:H\in\mathcal{A}\}$ in $\mathbb{Z}_q^n$. Likewise, the complement of $\mathcal{A}_q$ is given by
\[
M(\mathcal{A}_q):=\mathbb{Z}_q^n-\bigcup_{H_q\in\mathcal{A}_q}H_q.
\]

In 1999, Athanasiadis generalized the earlier work in \cite[Theorem 2.1]{athanasiadis1999extended}: the characteristic polynomial of $\mathcal{A}$ can be evaluated by counting points in the complement of $\mathcal{A}_q$ for large enough positive integers $q$ coprime to a constant that depends only on $\mathcal{A}$.
\begin{prop}[{\cite[Theorem 2.1]{athanasiadis1999extended}}]\label{prop:Zq-method}
Let \(\mathcal A\) be an integral arrangement in \(\mathbb R^n\). Then there exist positive integers $q_\mathcal{A}$ and $\rho_\mathcal{A}$ depending only on \(\mathcal A\), such that, for every positive integer \(q\) relatively prime to $\rho_\mathcal{A}$ with \(q>q_\mathcal{A}\),
\[
L(\mathcal A)\cong L(\mathcal{A}_q)\quad\;{\rm and}\;\quad\chi(\mathcal A,q)=|M(\mathcal A_q)|.
\]
\end{prop}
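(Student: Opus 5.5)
The plan is to reduce both assertions to solution counts of linear congruence systems modulo $q$, and to handle those counts with the Smith normal form. Write $\mathcal A=\{H_1,\dots,H_m\}$ with $H_i:\bm a_i\cdot\bm x=b_i$, $\bm a_i\in\mathbb Z^n$, $b_i\in\mathbb Z$. For each $I\subseteq\{1,\dots,m\}$ let $A_I$ be the integer matrix with rows $\bm a_i$ ($i\in I$), and let $(A_I\mid \bm b_I)$ be the augmented matrix. Over $\mathbb Q$, the flat $\bigcap_{i\in I}H_i$ is nonempty exactly when $\operatorname{rank}A_I=\operatorname{rank}(A_I\mid\bm b_I)$, and then it has dimension $n-\operatorname{rank}A_I$.

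\textbf{Choice of $\rho_{\mathcal A}$.} Define $\rho_{\mathcal A}$ as the product, over all $I$, of all nonzero invariant factors (elementary divisors) of $A_I$ and of $(A_I\mid\bm b_I)$. This is a finite list depending only on $\mathcal A$.

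\textbf{Counting one intersection.} Fix $I$ and choose unimodular $P,Q$ with $PA_IQ=\operatorname{diag}(d_1,\dots,d_k,0,\dots)$, where $k=\operatorname{rank}A_I$. The congruence $A_I\bm x\equiv\bm b_I\pmod q$ becomes $d_jy_j\equiv c_j$ for $j\le k$ and $0\equiv c_j$ for $j>k$, where $\bm c=P\bm b_I$.

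If $\gcd(q,\rho_{\mathcal A})=1$, each $d_j$ is invertible mod $q$. Two cases arise.
\begin{itemize}
\item If the rational system is consistent, then $c_j=0$ for $j>k$ over $\mathbb Z$. Hence the congruence system has exactly $q^{n-k}$ solutions in $\mathbb Z_q^n$.
\item If it is inconsistent, then $\operatorname{rank}(A_I\mid\bm b_I)=k+1$. Applying the same diagonalization to the augmented matrix shows that some $c_j$ with $j>k$ is a nonzero integer dividing a product of invariant factors of $(A_I\mid\bm b_I)$. More precisely, the $(k+1)$-st determinantal divisor of the augmented matrix is not a multiple of $q$'s prime factors. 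So $c_j\not\equiv0\pmod q$, and there are no solutions.
\end{itemize}
Thus $\big|\bigcap_{i\in I}(H_i)_q\big|$ equals $q^{\,n-\operatorname{rank}A_I}$ when $\bigcap_{i\in I}H_i\neq\emptyset$, and equals $0$ otherwise.

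\textbf{The counting identity.} Inclusion–exclusion gives
\[
|M(\mathcal A_q)|=\sum_{I}(-1)^{|I|}\Big|\bigcap_{i\in I}(H_i)_q\Big|=\sum_{I:\ \bigcap_{i\in I} H_i\ne\emptyset}(-1)^{|I|}q^{\,n-\operatorname{rank}A_I}.
\]
The right-hand side is Whitney's formula for $\chi(\mathcal A,q)$. Equivalently, it is Proposition \ref{semilattice} combined with the crosscut grouping of subsets by their join.

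\textbf{The poset isomorphism.} Map $\bigcap_{i\in I}H_i\mapsto\bigcap_{i\in I}(H_i)_q$. By the counts above, nonemptiness is preserved, and the size of each image is $q^{\dim}$. Two flats coincide iff $\operatorname{rank}A_I=\operatorname{rank}A_J=\operatorname{rank}A_{I\cup J}$ and $\bigcap_{I\cup J}\neq\emptyset$. This criterion is read off from the same counts: $|X_q\cap Y_q|=|X_q|=|Y_q|$. Hence the map is a well-defined, order-preserving bijection with order-preserving inverse. The threshold $q>q_{\mathcal A}$ is needed only to rule out degenerate small-$q$ coincidences, for instance $q$ dividing some $\bm a_i$ entirely or $q=1$. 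Taking $q_{\mathcal A}$ larger than all entries and minors suffices.

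\textbf{Main obstacle.} The delicate step is the inconsistent case modulo a composite $q$. There one must guarantee that the obstruction $c_j$ is a unit, not merely nonzero, mod $q$. I would handle it by including the determinantal divisors of every augmented matrix in $\rho_{\mathcal A}$, as above.
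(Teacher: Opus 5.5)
The paper gives no proof of this statement; it imports it from Athanasiadis. Your argument is therefore a self-contained substitute, and it is correct apart from one imprecise step noted below.

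The standard proof of results like this first establishes $L(\mathcal A)\cong L(\mathcal A_q)$ and $|X_q|=q^{\dim X}$. It then applies M\"obius inversion on $L(\mathcal A)$ to the number of points of $X_q$ lying in no smaller flat. You instead use inclusion--exclusion over index sets $I$ and match it termwise with Whitney's formula. So the counting identity needs only the dichotomy $|\bigcap_{i\in I}(H_i)_q|\in\{0,\,q^{\,n-\operatorname{rank}A_I}\}$, not the poset isomorphism, which then drops out of the same counts.

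Smith normal form is the right tool for composite $q$. The ``nonzero minors stay nonzero'' reasoning used in Lemma \ref{Cong1} works over a field with a central arrangement, but fails over $\mathbb Z_q$: e.g.\ $2x\equiv 0\pmod 4$ has two solutions although $2\not\equiv 0$.

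Your explicit $\rho_{\mathcal A}$ also shows that $q_{\mathcal A}=1$ suffices. The case of $q$ dividing every entry of some $\bm a_i$ is already excluded by $\gcd(q,\rho_{\mathcal A})=1$, since $\gcd(\bm a_i)$ is the invariant factor of $A_{\{i\}}$. So the threshold only has to force $q\ge 2$.

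The step to tighten is the inconsistent case. For an arbitrary diagonalizing $P$, it is false that some single $c_j$ with $j>k$ divides a product of invariant factors of $(A_I\mid\bm b_I)$; the $c_j$ may be pairwise coprime non-units. The correct statement is as follows.
\begin{itemize}
\item The only nonzero $(k+1)$-minors of $(PA_IQ\mid\bm c)$ are $\pm d_1\cdots d_k\,c_j$ for $j>k$.
\item Hence the $(k+1)$-st determinantal divisor is $\Delta_{k+1}(A_I\mid\bm b_I)=d_1\cdots d_k\cdot\gcd_{j>k}c_j$.
\item This is the product of the nonzero invariant factors of the augmented matrix, so it divides $\rho_{\mathcal A}$.
\item Therefore $\gcd_{j>k}c_j$ is prime to $q$, and since $q\ge 2$, some $c_j\not\equiv 0\pmod q$.
\end{itemize}
Alternatively, a unimodular change of rows $k+1,\dots,|I|$ leaves $PA_IQ$ unchanged and makes $c_{k+1}=\gcd_{j>k}c_j$ with the later $c_j$ zero. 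That normalization is what your sentence implicitly needs.

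A minor point: $\sum_I(-1)^{|I|}q^{\,n-\operatorname{rank}A_I}$ is Whitney's formula in the normalization $t^{\dim X}$, which Proposition \ref{FFM} presupposes. Proposition \ref{semilattice} uses the exponent $r-\operatorname{rk}$ and differs from it by the factor $q^{\,n-r}$ when $\mathcal A$ is not essential. Your ``equivalently'' needs that qualification.
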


Furthermore, $L(\mathcal A)\cong L(\mathcal{A}_q)$ corresponds to the natural order-preserving bijection from $L(\mathcal{A})$ to $L(\mathcal{A}_q)$ that sends $V\in L(\mathcal{A})$ to $V_q=\bigcap_{H\in\mathcal{A}_V}H_q\in L(\mathcal{A}_q)$. Note that for any $V\in L(\mathcal{A})$, $L(\mathcal A_V)$ and $L(\mathcal A^V)$ are the subposets of $L(\mathcal{A})$, while $L\big((\mathcal A_V)_q\big)$ and $L\big((\mathcal A^V)_q\big)$ are the corresponding subposets of $L(\mathcal{A}_q)$. Thus, the isomorphism $L(\mathcal A)\cong L(\mathcal{A}_q)$ directly induces the two isomorphisms given in Lemma \ref{Cong2}. 
\begin{lemma}\label{Cong2}
Let $\mathcal{A}$ be an integral arrangement in $\mathbb{R}^n$. If a positive integer $q>q_{\mathcal{A}}$ and $(q,\rho_\mathcal{A})=1$, then for any $V\in L(\mathcal{A})$,
\[
L(\mathcal A_V)\cong L\big((\mathcal A_V)_q\big)\quad{\rm and}\quad L(\mathcal A^V)\cong L\big((\mathcal A^V)_q\big).
\]
Moreover, we have
\[
(\mathcal A_V)_q=(\mathcal A_q)_{V_q}\quad{\rm and }\quad (\mathcal A^V)_q=\mathcal A_q^{V_q}.
\]
\end{lemma}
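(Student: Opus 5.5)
The plan is to deduce everything from the natural order-preserving bijection $\psi:L(\mathcal A)\to L(\mathcal A_q)$, $V\mapsto V_q=\bigcap_{H\in\mathcal A_V}H_q$, supplied by Proposition \ref{prop:Zq-method}. Fix $V\in L(\mathcal A)$. First I would show $(\mathcal A_V)_q=(\mathcal A_q)_{V_q}$ as sets of cosets, by checking that for $H\in\mathcal A$ one has $V\subseteq H$ if and only if $V_q\subseteq H_q$. The forward direction is immediate from the definition of $V_q$ as the intersection over $\mathcal A_V$. For the converse, suppose $V_q\subseteq H_q$. Then $V_q\cap H_q=V_q$, and this intersection is the image of $V\cap H$, which is nonempty because $V_q\neq\emptyset$ and $\psi$ matches nonempty intersections. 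Injectivity of $\psi$ then gives $V\cap H=V$, i.e.\ $V\subseteq H$. The same reasoning shows that $H\mapsto H_q$ is injective for $q>q_{\mathcal A}$, so the two subarrangements are in bijection.

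Next I would identify $L(\mathcal A_V)$ with the principal order ideal $L(\mathcal A)_V=\{W\in L(\mathcal A):W\supseteq V\}$, since every intersection of hyperplanes containing $V$ contains $V$, and conversely. Likewise $L((\mathcal A_q)_{V_q})$ is the ideal below $V_q$ in $L(\mathcal A_q)$. An order isomorphism maps principal ideals onto principal ideals, so $\psi$ restricts to $L(\mathcal A_V)\cong L((\mathcal A_V)_q)$.

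For the restriction I would work with $\mathcal A_q^{V_q}=\{H_q\cap V_q\}-\{\emptyset,V_q\}$. Its intersection poset is the dual ideal $\{W_q:W_q\subseteq V_q\}$, because intersections of the sets $H_q\cap V_q$ are exactly the nonempty $W_q$ with $W_q\subseteq V_q$; here one uses that $\psi$ preserves intersections, i.e.\ meets in the semilattice. The same description holds for $L(\mathcal A^V)$ over $\mathbb R$. The bijection $\psi$ therefore restricts to $L(\mathcal A^V)\cong L(\mathcal A_q^{V_q})$. Moreover $H\cap V=\emptyset$ if and only if $H_q\cap V_q=\emptyset$, and $H\cap V=V$ if and only if $H_q\cap V_q=V_q$. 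Hence the elements removed from the restriction correspond exactly, and $(\mathcal A^V)_q$, read as $\{(H\cap V)_q\}$ with $(H\cap V)_q:=\psi(H\cap V)=H_q\cap V_q$, equals $\mathcal A_q^{V_q}$.

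The main obstacle is conceptual bookkeeping rather than computation. The objects $(\mathcal A_V)_q$ and $(\mathcal A^V)_q$ must be interpreted consistently: $\mathcal A^V$ lives in $V$, not in $\mathbb Z^n$, so its reduction mod $q$ should be defined through $\psi$. Along the way one has to justify that $\psi$ sends $H\cap V$ to $H_q\cap V_q$ and preserves emptiness. I would take this directly from the form of the isomorphism in Proposition \ref{prop:Zq-method}, namely $\bigcap_{i\in I}H_i\mapsto\bigcap_{i\in I}H_{i,q}$, with nonemptiness preserved. The key point is that this map is compatible with intersections of index sets.
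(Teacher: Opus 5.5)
Your proposal is correct and follows the paper's route: restrict the natural isomorphism $V\mapsto V_q$ of Proposition~\ref{prop:Zq-method} to the principal ideal below $V$ (which is $L(\mathcal A_V)$) and to the upper set above $V$ (which is $L(\mathcal A^V)$). The paper's proof amounts to exactly that restriction; you additionally verify the set-level equalities, including $V\subseteq H\iff V_q\subseteq H_q$, the preservation of emptiness, and the definition of $(H\cap V)_q$, all of which the paper leaves implicit. One harmless slip: in the reverse-inclusion order intersections are joins rather than meets, but since $\psi$ is an order isomorphism this does not affect your argument.
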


Based on Proposition  \ref{prop:Zq-method}, below we provide a combinatorial interpretation of Wang's convolution formula.
\begin{theorem}\label{thm:Zpq-convolution}
Let \(\mathcal A\) be an integral arrangement in \(\mathbb R^n\). Suppose positive integers $p>q_\mathcal{A}$ and $q>q_\mathcal{A}$ satisfy $(p,q)=1$, 
$(p,\rho_\mathcal{A})=1$ and $(q,\rho_\mathcal{A})=1$. Then 
\[
\chi(\mathcal A,pq)=\sum_{V\in L(\mathcal A)}\chi(\mathcal A^V,p)\chi(\mathcal A_V,q).
\]
\end{theorem}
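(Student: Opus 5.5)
We will argue by a bijection through the Chinese Remainder Theorem, in parallel with the proof of the preceding theorem over $\mathbb F_{p^2}$. Since $(p,q)=1$ and both are coprime to $\rho_{\mathcal A}$, so is $pq$; also $pq>q_{\mathcal A}$. Hence Proposition \ref{prop:Zq-method} applies to each of $p$, $q$, $pq$, and Lemma \ref{Cong2} applies with moduli $p$ and $q$. The CRT isomorphism $\psi:\mathbb Z_{pq}^n\to\mathbb Z_p^n\times\mathbb Z_q^n$, $\bm z\mapsto(\bm z\bmod p,\bm z\bmod q)$, satisfies
\[
\bm z\in H_{pq}\iff \bm z\bmod p\in H_p\ \text{and}\ \bm z\bmod q\in H_q,
\]
because $\bm a\cdot\bm z\equiv b \pmod{pq}$ holds exactly when it holds modulo $p$ and modulo $q$. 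This is the analogue of \eqref{A=B3}.

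We then define
\[
\phi:\bigsqcup_{V\in L(\mathcal A)}\Big(M\big((\mathcal A^V)_p\big)\times M\big((\mathcal A_V)_q\big)\Big)\to M(\mathcal A_{pq}),\qquad (\bm x,\bm y)\mapsto\psi^{-1}(\bm x,\bm y).
\]
Here $M((\mathcal A^V)_p)$ is the set of points of $V_p$ lying on no $H_p\cap V_p$ with $H\notin\mathcal A_V$, and $M((\mathcal A_V)_q)$ is the set of points of $\mathbb Z_q^n$ avoiding every $H_q$ with $H\in\mathcal A_V$. This uses Lemma \ref{Cong2}, namely $(\mathcal A^V)_p=\mathcal A_p^{V_p}$ and $(\mathcal A_V)_q=(\mathcal A_q)_{V_q}$.

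\emph{Well-definedness.} Suppose $\psi^{-1}(\bm x,\bm y)\in H_{pq}$. Then $\bm x\in H_p$ and $\bm y\in H_q$. Since $\bm x\in V_p$ avoids every hyperplane of $\mathcal A_p^{V_p}$, we get $V_p\subseteq H_p$. By the isomorphism $L(\mathcal A)\cong L(\mathcal A_p)$ this gives $V\subseteq H$, so $H\in\mathcal A_V$. But then $\bm y\in H_q$ contradicts $\bm y\in M((\mathcal A_V)_q)$.

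\emph{Injectivity.} The sets indexed by different $V$ do not overlap in the image. Indeed, $V$ is recovered from $\bm x$ as the smallest flat of $L(\mathcal A_p)$ containing $\bm x$, transported back to $L(\mathcal A)$. Within a fixed $V$, injectivity follows from the fact that $\psi$ is a bijection.

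\emph{Surjectivity.} Given $\bm z\in M(\mathcal A_{pq})$, set $(\bm x,\bm y)=\psi(\bm z)$. Let $V_{\bm x}$ be the intersection of all $H_p$ containing $\bm x$; this is nonempty because it contains $\bm x$, so it is a flat of $L(\mathcal A_p)$. Let $V\in L(\mathcal A)$ correspond to it. By minimality, $\bm x$ lies on no $H_p$ with $V_p\not\subseteq H_p$, so $\bm x\in M(\mathcal A_p^{V_p})$. If $\bm y\in H_q$ for some $H\in\mathcal A_V$, then $\bm x\in V_p\subseteq H_p$ as well, forcing $\bm z\in H_{pq}$, which is a contradiction.

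Counting both sides and applying Proposition \ref{prop:Zq-method} together with Lemma \ref{Cong2} gives
\[
\chi(\mathcal A,pq)=\sum_{V\in L(\mathcal A)}\chi(\mathcal A^V,p)\,\chi(\mathcal A_V,q).
\]

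The main obstacle is the bookkeeping over $\mathbb Z_p$ and $\mathbb Z_q$ when these are not fields. Two facts must come from the isomorphism $L(\mathcal A)\cong L(\mathcal A_p)$ of Proposition \ref{prop:Zq-method} rather than from linear algebra:
\begin{enumerate}
\item the intersection of all $H_p$ containing $\bm x$ is a genuine element of $L(\mathcal A_p)$;
\item $V_p\subseteq H_p$ holds exactly when $V\subseteq H$.
\end{enumerate}
We would handle both by noting that the isomorphism is induced by $H\mapsto H_p$ and respects inclusion of intersections.
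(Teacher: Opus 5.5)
Your proposal is correct and follows the paper's proof essentially step for step. Both use the Chinese Remainder isomorphism $\mathbb Z_{pq}^n\cong\mathbb Z_p^n\times\mathbb Z_q^n$, the equivalence $\bm z\in H_{pq}\iff \bm x\in H_p$ and $\bm y\in H_q$, and the map from $\bigsqcup_V M\big((\mathcal A^V)_p\big)\times M\big((\mathcal A_V)_q\big)$, and both prove well-definedness and surjectivity via the minimal flat $V_{\bm x}$. Your injectivity step is slightly more careful than the paper's (which just cites injectivity of $\theta$): you check that $V$ is recovered from $\bm x$, so the pieces of the disjoint union have disjoint images.
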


\begin{proof}
Since \(p\) and \(q\) are coprime, we have the natural isomorphism $\mathbb{Z}_{pq}\cong\mathbb{Z}_p\times\mathbb{Z}_q$ by the Chinese Remainder Theorem. This automatically gives rise to the following isomorphism:
\[
\theta:\mathbb Z_{pq}^n\xrightarrow{\sim}\mathbb Z_p^n\times\mathbb Z_q^n,\qquad[\bm z]_{pq}\mapsto([\bm z]_p, [\bm z]_q) \quad{\rm for}\quad \bm z\in\mathbb{Z}^n.
\]
Consequently, for any $\bm x\in\mathbb{Z}_p^n,\bm y\in\mathbb{Z}_q^n,\bm z\in\mathbb{Z}_{pq}^n$, if 
$\theta(\bm z)=(\bm x,\bm y)$, then 
\begin{equation}\label{A=B4}
\bm z\in H_{pq}\iff\bm x\in H_p\;\text{ and }\;\bm y\in H_q.
\end{equation}
The mapping $\theta$ naturally induces the following map: 
\[
\psi:\bigsqcup_{V\in L(\mathcal A)}\Big(M\big((\mathcal A^V)_p\big)\times M\big((\mathcal A_V)_q\big)\Big)\to M(\mathcal A_{pq}),\quad(\bm x,\bm y)\mapsto \theta^{-1}(\bm x,\bm y).
\]

First, we prove that \(\psi\) is well-defined by contradiction. Suppose there exist \(V\in L(\mathcal A)\) and \((\bm x,\bm y)\in M\big((\mathcal A^V)_p\big)\times M\big((\mathcal A_V)_q\big)\) such that \(\psi(\bm x,\bm y)\notin M(\mathcal A_{pq})\). Then \(\psi(\bm x,\bm y)\in H_{pq}\) for some \(H\in\mathcal A\). According to \eqref{A=B4}, we have \(\bm x\in H_p\) and \(\bm y\in H_q\). Since \(\bm x\in M\big((\mathcal A^V)_p\big)=M(\mathcal A_p^{V_p})\subseteq V_p\) from Lemma \ref{Cong2}, the condition \(\bm x\in H_p\) implies \(V\subseteq H\). Thus, \(H_q\in(\mathcal A_V)_q\), and hence \(\bm y\notin M\big((\mathcal A_V)_q\big)\), a contradiction. So \(\psi\) is well-defined.
	
It is obvious that the injectivity of \(\psi\) directly follows from the injectivity of \(\theta\). Next, we show that $\psi$ is surjective. From the definition of $\psi$, it remains to prove that for any $\bm z\in M(\mathcal{A}_{pq})$, $\psi^{-1}(\bm z)=\theta(\bm z)=(\bm x,\bm y)\in M\big((\mathcal A^V)_p\big)\times M\big((\mathcal A_V)_q\big)$ for some \(V\in L(\mathcal A)\), i.e., $\psi^{-1}$ is well-defined. Consider the minimal flat $V_{\bm x}\in L(\mathcal A_p)$ containing $\bm x$, given by
\[
V_{\bm x}:=\bigcap_{H\in\mathcal A,\, \bm x\in H_p}H_p \in L(\mathcal A_p).
\]
From Proposition \ref{prop:Zq-method}, we have $L(\mathcal{A})\cong L(\mathcal{A}_{p})$. Accordingly, let \(V\in L(\mathcal A)\)  be the corresponding flat with \(V_p=V_{\bm x}\) under the lattice isomorphism $L(\mathcal{A})\cong L(\mathcal{A}_{p})$. It follows that  \(\bm x\in V_p\). The minimality of $V_p$ implies that $V_p\subseteq H_p$ whenever $\bm x\in H_p$ for any $H\in\mathcal{A}$. Consequently, we have $\bm x\in M(\mathcal A_p^{V_p})=M\big((\mathcal A^V)_p\big)$ via Lemma \ref{Cong2}. It suffices to prove that \(\bm y\in M\big((\mathcal A_V)_q\big)\). Otherwise, there exists $H\in\mathcal{A}$ such that $V\subseteq H$ and \(\bm y\in H_q\). Combining \(\bm x\in V_p\subseteq H_p\), we deduce
\(\bm z\in H_{pq}\) by \eqref{A=B4}, contradicting \(\bm z\in M(\mathcal A_{pq})\). Thus, $\bm y\in M\big((\mathcal A_V)_q\big)$, and hence \(\psi\) is indeed surjective.

We have now shown that $\psi$ is a bijection. Thus, the cardinalities of  the sets $M(\mathcal A_{pq})$ and $\bigsqcup_{V\in L(\mathcal A)}\Big(M\big((\mathcal A^V)_p\big)\times M\big((\mathcal A_V)_q\big)\Big)$ coincide, i.e.,
\[
|M(\mathcal A_{pq})|=\sum_{V\in L(\mathcal A)}\big|M\big((\mathcal A^V)_p\big)\big|\big|M\big((\mathcal A_V)_q\big)\big|.
\]
Since $p,q>q_\mathcal{A}$, $(p,q)=1$, $(p,\rho_\mathcal{A})=1$ and $(q,\rho_\mathcal{A})=1$, we have $pq>q_\mathcal{A}$ and $(pq,\rho_\mathcal{A})=1$. Combining with Proposition \ref{prop:Zq-method} and Lemma \ref{Cong2}, we deduce that  
\[
\chi(\mathcal A,pq)=\sum_{V\in L(\mathcal A)}\chi(\mathcal A^V,p)\chi(\mathcal A_V,q).
\]
This completes the proof.
\end{proof}

It is natural to conjecture that the convolution formula established in Corollary \ref{cor:arrangement} should also admit a corresponding interpretation over $\mathbb{Z}_{pq}$, $\mathbb{Z}_p$, and $\mathbb{Z}_q$. To date, however, we have not been able to find a natural bijection that proves this. We therefore record this as an open problem, in the hope that it may be resolved by other researchers in the future.

\section*{Acknowledgements}
This work is supported by the National Natural Science Foundation of China (Grant No. 12571350) and the Guangdong Basic and Applied Basic Research Foundation (Grant No. 2026A1515012237, Grant No. 2025A1515010457). 

\providecommand{\bysame}{\leavevmode\hbox to3em{\hrulefill}\thinspace}
  \providecommand{\MR}{\relax\ifhmode\unskip\space\fi MR }
  \providecommand{\MRhref}[2]{%
    \href{http://www.ams.org/mathscinet-getitem?mr=#1}{#2}}
  \providecommand{\href}[2]{#2}
 
  \end{document}